\newcommand{\be}{\begin{equation}}
\newcommand{\ee}{\end{equation}}
\newcommand{\bi}{\begin{itemize}}
\newcommand{\ei}{\end{itemize}}
\newcommand{\norm}[1]{\left\Vert{#1}\right\Vert} 
\newcommand{\ps}[2]{\langle #1,#2\rangle}
\newcommand{\R}{\mathbb{R}}
\newcommand{\HH}{\mathcal{H}}
\newcommand{\hu}{\textbf{H}_1}
\newcommand{\hd}{\textbf{H}_2}
\newcommand{\f}{\alpha}
\newcommand{\EE}{\mathcal{E}}
\def\argmin{\textup{argmin}\,}
\newcommand{\off}[1]{}
\title{Optimal convergence rates for Nesterov acceleration}
\date{\today}
\author{J-F. Aujol $^1$, Ch. Dossal $^2$ and A. Rondepierre $^{2,3}$\\
\mbox{} \\
$^1$ Univ. Bordeaux, Bordeaux INP, CNRS, IMB, UMR 5251, F-33400 Talence, France.\\
$^2$ IMT, Univ. Toulouse, INSA Toulouse, France. \\
$^3$ LAAS, Univ. Toulouse, CNRS, Toulouse, France.
\mbox{} \\
{\scriptsize Jean-Francois.Aujol@math.u-bordeaux.fr,
$\{$Charles.Dossal,Aude.Rondepierre$\}$@insa-toulouse.fr}
}
\begin{document}

\maketitle

\begin{abstract}
In this paper, we study the behavior of solutions of the ODE associated to Nesterov acceleration. It is well-known since the pioneering work of Nesterov that the rate of convergence $O(1/t^2)$ is optimal for the class of convex functions with Lipschitz gradient.
In this work, we show that better convergence rates can be obtained with some additional geometrical conditions, such as \L ojasiewicz property. More precisely, we prove the optimal convergence rates that can be obtained depending on the geometry of the function $F$ to minimize. The convergence rates are new, and they shed new light on the behavior of Nesterov acceleration schemes. We prove in particular that the classical Nesterov scheme may provide convergence rates that are worse than the classical gradient descent scheme on sharp functions: for instance, the convergence rate for strongly convex functions is not geometric for the classical Nesterov scheme (while it is the case for the gradient descent algorithm). This shows that applying the classical Nesterov acceleration on convex functions without looking more at the geometrical properties of the objective functions may lead to sub-optimal algorithms.
\end{abstract}

\begin{keywords}
Lyapunov functions, rate of convergence, ODEs, optimization, \L ojasiewicz property.
\end{keywords}

\begin{AMS}
34D05, 65K05, 65K10, 90C25, 90C30
\end{AMS}
 
\section{Introduction}\label{sec_intro}
The motivation of this paper lies in the minimization of a differentiable function $F:\R^n\rightarrow \R$ with at least one minimizer. Inspired by Nesterov pioneering work \cite{nesterov1983method}, we study the following ordinary differential equation (ODE):  
\begin{equation} \label{ODE}
\ddot{x}(t)+\frac{\f}{t}\dot{x}(t)+\nabla F(x(t))=0,
\end{equation}
where 
$\f>0$, 
with $t_0>0$, $x(t_0)=x_0$ and $\dot x(t_0)=v_0$. 
This ODE is associated to the Fast Iterative Shrinkage-Thresholding Algorithm (FISTA)\cite{beck2009fast} or the Accelerated Gradient Method \cite{nesterov1983method} : 
\begin{equation}
x_{n+1}=y_n-h \nabla F(y_n) \text{ and } 
y_n=x_n+\frac{n}{n+\f}(x_n-x_{n-1}),
\end{equation}
with $h$ and $\f$ positive parameters. 
This equation, including or not a perturbation term, has been widely studied in the literature \cite{attouch2000heavy,su2016differential,cabot2009long,balti2016asymptotic,may2015asymptotic}. This equation belongs to a set of similar equations with various viscosity terms. It is impossible to mention all works related to the heavy ball equation or other viscosity terms. We refer the reader to the following recent works \cite{Begout2015,jendoubi2015asymptotics,may2015asymptotic,cabot2007asymptotics,attouch2002dynamics,polyak2017lyapunov,attouch2017asymptotic} and the references therein.

Throughout the paper, we assume that, for any initial conditions $(x_0,v_0)\in \R^n\times\R^n$, the Cauchy problem associated with the differential equation \eqref{ODE}, has a unique global solution $x$ satisfying $(x(t_0),\dot x(t_0) )=(x_0,v_0)$. This is guaranteed for instance when the gradient function $\nabla F$ is Lipschitz  on bounded subsets of $\R^n$. 

In this work we investigate the convergence rates of the values $F(x(t))-F^*$ for the trajectories of the ODE \eqref{ODE}. It was proved in \cite{attouch2018fast} that if $F$ is convex with Lipschitz gradient and if $\f>3$, the trajectory $F(x(t))$ converges  to the minimum $F^*$ of $F$. It is also known that for $\f\geqslant  3$ and $F$ convex we have:
\begin{equation}
F(x(t))-F^*=O \left(
t^{-2}
\right).
\end{equation}   

Extending to the continuous setting the work of Chambolle-Dossal~\cite{chambolle2015convergence} of the convergence of iterates of FISTA, Attouch et al. \cite{attouch2018fast} proved that for 
$\f>3$ the trajectory $x$ converges (weakly in infinite-dimensional Hilbert space) to a minimizer of $F$. 
Su et al. \cite{su2016differential} proposed some new results, proving the integrability of $t\mapsto t(F(x(t))-F^*)$ when $\f>3$, and they gave more accurate 
bounds on $F(x(t))-F^*$ in the case of strong convexity. Always in the case of the strong convexity of $F$, Attouch,   Chbani, Peypouquet and Redont proved in \cite{attouch2018fast} that the trajectory $x(t)$ satisfies $F(x(t))-F^*=O\left( t^{-\frac{2\alpha}{3}}\right)$ for any $\alpha>0$. More recently several studies including a perturbation term \cite{attouch2018fast,AujolDossal,aujol2015stability,vassilis2018differential} have been proposed.

In this work, we focus on the decay of $F(x(t))-F^*$ depending on more general geometries of $F$ around its set of minimizers than strong convexity.  
Indeed, Attouch et al. in \cite{attouch2018fast} proved that if $F$ is convex then for any $\alpha>0$, $F(x(t))-F^*$ tends to $0$ when $t$ goes to infinity. Combined with the coercivity of $F$, this convergence implies that the distance
$d(x(t),X^*)$ between $x(t)$ and the set of minimizers $X^*$ tends to $0$. To analyse the asymptotic behavior of $F(x(t))-F^*$  we can thus only assume hypotheses on $F$ only on the neighborhood of $X^*$ and may avoid the tough question of the convergence of the the trajectory $x(t)$ to a point of $X^*$. 

More precisely, we consider functions behaving like $\norm{x-x^*}^{\gamma}$ around their set of minimizers for any $\gamma\geqslant 1$. Our aim is to show the optimal convergence rates that can be obtained depending on this local geometry. In particular we prove that if $F$ is strongly convex with a Lipschitz continuous gradient, the decay is actually better than $O\left( t^{-\frac{2\alpha}{3}}\right)$. We also prove that the actual decay for quadratic functions is  $O\left(
t^{-\alpha}
\right)$. These results rely on two geometrical conditions: a first one ensuring that the function is sufficiently flat around the set of minimizers, and a second one ensuring that it is sufficiently sharp. In this paper, we will show that both conditions are important to get the expected convergence rates: the flatness assumption ensures that the function is not too sharp and may prevent from bad oscillations of the solution, while the sharpness condition ensures that the magnitude of the gradient of the function is not too low in the neighborhood of the minimizers.

The paper is organized as follows.
In Section~\ref{sec_geom}, we introduce the geometrical hypotheses we consider on the function $F$, and their relation with \L ojasiewicz property. We then recap the state of the art results on the ODE \eqref{ODE} in Section~\ref{sec_state}. We present the contributions of the paper in Section~\ref{sec_contrib}: depending on the geometry of the function $F$ and the value of the damping parameter $\alpha$, we give optimal rates of convergence. The proofs of the theorems are given in Section~\ref{sec_proofs}. Some technical proofs are postponed to Appendix~\ref{appendix}.

\section{Local geometry of convex functions}\label{sec_geom}
Throughout the paper we assume that the ODE \eqref{ODE} is defined in $\R^n$ equipped with the euclidean scalar product $\langle \cdot,\cdot\rangle$ and the associated norm $\|\cdot\|$. As usual $B(x^*,r)$ denotes the open euclidean ball with center $x^*$ and radius $r>0$ while $\bar B(x^*,r)$ denotes the closed euclidean ball with center $x^*$ and radius $r>0$.

In this section we introduce two notions describing the geometry of a convex function around its minimizers.
\begin{definition}
Let $F:\R^n\rightarrow \R$ be a convex differentiable function, $X^*:=\argmin F\neq \emptyset$ and: $F^*:=\inf F$. 
\begin{enumerate}
\item[(i)] Let $\gamma \geqslant 1$. The function $F$ satisfies the hypothesis $\hu(\gamma)$ if, for any minimizer $x^*\in X^*$, there exists $\eta>0$ such that:
$$\forall x\in B(x^*,\eta),\quad F(x) - F^* \leqslant \frac{1}{\gamma} \langle \nabla F(x),x-x^*\rangle.$$
\item[(ii)] 
Let $r\geqslant 1$. The function $F$ satisfies the growth condition $\hd(r)$ if, for any minimizer $x^*\in X^*$, there exist $K>0$ and $\varepsilon>0$, such that:
\begin{equation*}
\forall x\in B(x^*,\varepsilon),\quad K d(x,X^*)^{r}\leqslant F(x)-F^*.
\end{equation*}
\end{enumerate}
\end{definition}


The hypothesis $\hu(\gamma)$ has already been used in \cite{cabot2009long} and later in \cite{su2016differential,AujolDossal}. This is a mild assumption, requesting slightly more than the convexity of $F$ in the neighborhood of its minimizers. Observe that any convex function automatically satisfies $\hu(1)$ and that any differentiable function $F$ for which $(F-F^*)^{\frac{1}{\gamma}}$ is convex for some $\gamma\geq 1$, satisfies $\hu(\gamma)$. Nevertheless having a better intuition of the geometry of convex functions satisfying $\hu(\gamma)$ for some $\gamma\geq 1$, requires a little more effort:
\begin{lemma}
Let $F:\R^n \rightarrow \R$ be a convex differentiable function with $X^*=\argmin F\neq \emptyset$, and $F^*=\inf F$. If $F$ satisfies $\hu(\gamma)$ for some $\gamma\geq 1$, then:
\begin{enumerate}
\item $F$ satisfies $\hu(\gamma')$ for all $\gamma'\in[1,\gamma]$.
\item For any minimizer $x^*\in X^*$, there exists $M>0$ and $\eta >0$ such that:
\begin{equation}
\forall x\in B(x^*,\eta),~F(x) -F^* \leqslant M \|x-x^*\|^\gamma.\label{hyp:H1}
\end{equation}
\end{enumerate} \label{lem:geometry}
\end{lemma}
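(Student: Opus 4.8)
My plan is to treat the two assertions separately: the first is an immediate monotonicity-in-$\gamma$ observation, while the second is the substantive claim, which I would reduce to a one-dimensional differential inequality along rays emanating from $x^*$.

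For the first point, I would start from the fact that convexity of $F$ together with $x^*\in X^*$ forces $\ps{\nabla F(x)}{x-x^*}\geqslant F(x)-F^*\geqslant 0$ for every $x$. Consequently, if $1\leqslant \gamma'\leqslant \gamma$, then $\frac{1}{\gamma}\leqslant\frac{1}{\gamma'}$, and $\hu(\gamma)$ immediately yields
$$F(x)-F^*\leqslant \frac{1}{\gamma}\ps{\nabla F(x)}{x-x^*}\leqslant \frac{1}{\gamma'}\ps{\nabla F(x)}{x-x^*}$$
on the same ball $B(x^*,\eta)$, which is exactly $\hu(\gamma')$.

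For the second point, fix a minimizer $x^*$ and the radius $\eta>0$ given by $\hu(\gamma)$, and for a unit vector $u$ set $g_u(t):=F(x^*+tu)-F^*$ for $t\in[0,\eta)$. Since $F$ is convex and differentiable, $g_u$ is a nonnegative convex function of class $C^1$ with $g_u(0)=0$ and $g_u'(t)=\ps{\nabla F(x^*+tu)}{u}$. Writing $x=x^*+tu$ in $\hu(\gamma)$ and using $\ps{\nabla F(x)}{x-x^*}=t\,g_u'(t)$, the hypothesis becomes the differential inequality $\gamma\,g_u(t)\leqslant t\,g_u'(t)$. On any subinterval where $g_u>0$ this reads $\frac{d}{dt}\big(\ln g_u(t)-\gamma\ln t\big)\geqslant 0$, so $t\mapsto g_u(t)/t^{\gamma}$ is nondecreasing there; comparing with a fixed radius $\eta'\in(0,\eta)$ gives $g_u(t)\leqslant \frac{g_u(\eta')}{(\eta')^{\gamma}}\,t^{\gamma}$ for all such $t\leqslant\eta'$. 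To turn this into the uniform bound, I would first dispose of the degenerate directions: since $g_u$ is convex, nonnegative and vanishes at $0$, its zero set is an initial segment $[0,t_0]$, on which the inequality is trivial, while on $(t_0,\eta')$ the monotonicity argument applies; hence the estimate holds for every $t\in(0,\eta')$ and every unit $u$. Finally, since $u\mapsto F(x^*+\eta'u)-F^*$ is continuous on the compact unit sphere, it is bounded by some finite $C\geqslant 0$; taking $M:=\max\{1,C/(\eta')^{\gamma}\}$ and rewriting $t=\norm{x-x^*}$, $u=(x-x^*)/\norm{x-x^*}$ yields $F(x)-F^*\leqslant M\norm{x-x^*}^{\gamma}$ on $B(x^*,\eta')$.

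The main obstacle I anticipate is precisely this passage from the ray-by-ray estimate to a uniform constant: one must check that the monotonicity argument survives the directions along which $g_u$ vanishes on an initial segment (handled by convexity, since once $g_u$ becomes positive it stays positive on $[0,\eta)$), and that the comparison radius $\eta'$ can be chosen once and for all so that the supremum over the sphere is finite. Everything else amounts to the routine integration of a one-dimensional logarithmic-derivative inequality.
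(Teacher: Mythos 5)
Your proof is correct and follows essentially the same route as the paper's: the first point is the same monotonicity-in-$\gamma$ observation, and for the second you integrate the one-dimensional inequality $\gamma g_u(t)\leqslant t\,g_u'(t)$ along rays to show $t\mapsto g_u(t)/t^{\gamma}$ is nondecreasing and then extract a uniform constant by compactness — exactly the paper's argument, merely parametrized on $[0,\eta']$ rather than rescaled to $[0,1]$. Your explicit handling of the directions where $g_u$ vanishes on an initial segment is a small point of extra care that the paper's one-dimensional lemma glosses over.
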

\begin{proof}
The proof of the first point of Lemma \ref{lem:geometry} is straightforward. The second point relies on the following elementary result in dimension $1$: let $g:\R\rightarrow \R$ be a convex differentiable function such that $0\in \argmin g$, $g(0)=0$ and:
$$\forall t\in [0,1],~g(t) \leq \frac{t}{\gamma}g'(t),$$ 
for some $\gamma\geqslant 1$. Then the function $t\mapsto t^{-\gamma}g(t)$ is monotonically increasing on $[0,1]$ and:
\begin{equation}
\forall t\in [0,1],~g(t)\leqslant g(1)t^\gamma.\label{majo1D}
\end{equation}

Consider now any convex differentiable function $F:\R^n \rightarrow \R$ satisfying the condition $\hu(\gamma)$, and $x^*\in X^*$. There then exists $\eta>0$ such that:
$$\forall x\in B(x^*,\eta),\quad 0\leqslant F(x) - F^* \leqslant \frac{1}{\gamma} \langle \nabla F(x),x-x^*\rangle.$$
Let $\eta'\in(0,\eta)$. For any $x\in\bar B(x^*,\eta')$ with $x \neq x^*$, we introduce the following univariate function:
$$g_x:t\in [0,1]\mapsto F\left(x^*+t\eta'\frac{x-x^*}{\|x-x^*\|}\right)-F^*.$$
First observe that, for all $x\in \bar B(x^*,\eta')$ with $x \neq x^*$ and for all $t\in [0,1]$, we have: $x^*+t\eta'\frac{x-x^*}{\|x-x^*\|}\in \bar B(x^*,\eta').$ Since $F$ is continuous on the compact set $\bar B(x^*,\eta')$, we deduce that:
\begin{equation}
\exists M>0,~\forall x\in \bar B(x^*,\eta')  \ \mbox{ with $x \neq x^*$},~ \forall t\in[0,1],~g_x(t)\leq M.\label{bounded}
\end{equation}
Note here that the constant $M$ only depends on the point $x^*$ and the real constant $\eta'$. 

Then, by construction, $g_x$ is a convex differentiable function satisfying: $0\in \argmin(g_x)$, $g_{x}(0)=0$ and:
\begin{eqnarray*}
\forall t\in (0,1],~g_x'(t) &=& \left\langle \nabla F\left(x^*+t\eta'\frac{x-x^*}{\|x-x^*\|}\right),\eta'\frac{x-x^*}{\|x-x^*\|}\right\rangle\\
&\geqslant & \frac{\gamma}{t}\left(F\left(x^*+t\eta\frac{x-x^*}{\|x-x^*\|}\right)-F^*\right) = \frac{\gamma}{t} g_x(t).
\end{eqnarray*}
Thus, using the one dimensional result \eqref{majo1D} and the uniform bound \eqref{bounded}, we get:
\begin{equation}
\forall x\in \bar B(x^*,\eta') \ \mbox{ with $x \neq x^*$},~\forall t\in [0,1],~g_{x}(t) \leqslant g_x(1)t^{\gamma}\leqslant Mt^{\gamma}.
\end{equation}
Finally by choosing $t=\frac{1}{\eta'}\|x-x^{\ast}\|$, we obtain the expected result.
\end{proof}

In other words, the hypothesis $\hu(\gamma)$ can be seen as a ``flatness'' condition on the function $F$ in the sense that it ensures that $F$ is sufficiently flat (at least as flat as $x\mapsto \|x\|^\gamma$) in the neighborhood of its minimizers.

The hypothesis $\hd(r)$, $r\geqslant 1$, is a growth condition on the function $F$ around any minimizer (any critical point in the non-convex case). It is sometimes also called $r$-conditioning \cite{garrigos2017convergence} or H\"olderian error bounds \cite{Bolte2017}. This assumption is motivated by the fact that, when $F$ is convex, $\hd(r)$ is equivalent to the famous \L ojasiewicz inequality \cite{Loja63,Loja93}, a key tool in the mathematical analysis of continuous (or discrete) subgradient dynamical systems, with exponent $\theta = 1-\frac{1}{r}$:
\begin{definition}
\label{def_loja}
A differentiable function $F:\mathbb R^n \to \mathbb R$ is said to have the \L ojasiewicz property with exponent $\theta \in [0,1)$ if, for any critical point $x^*$, there exist $c> 0$ and $\varepsilon >0$ such that:
\begin{equation}
\forall x\in B(x^*,\varepsilon),~\|\nabla F(x)\| \geqslant c|F(x)-F(x^*)|^{\theta},\label{loja}
\end{equation}
where: $0^0=0$ when $\theta=0$ by convention.
\end{definition}

When the set $X^*$ of the minimizers is a connected compact set, the \L ojasiewicz inequality turns into a geometrical condition on $F$ around its set of minimizers $X^*$, usually referred to as H\"older metric subregularity \cite{kruger2015error}, and whose proof can be easily adapted from \cite[Lemma 1]{AttouchBolte2009}:
\begin{lemma}
Let $F:\R^n\rightarrow \R$ be a convex differentiable function satisfying the growth condition $\hd(r)$ for some $r\geqslant 1$. Assume that the set $X^*=\argmin F$ is compact. Then there exist $K>0$ and $\varepsilon >0$ such that for all $x\in \R^n$:
$$d(x,X^*) \leqslant \varepsilon\Rightarrow K d(x,X^*)^{r}\leqslant F(x)-F^*.$$\label{lem:H2}
\end{lemma}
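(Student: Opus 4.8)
The plan is to promote the pointwise (local) constants furnished by $\hd(r)$ into a single pair of uniform constants valid on a whole neighborhood of $X^*$, using the compactness of $X^*$ through a finite covering argument. The hypothesis $\hd(r)$ provides, for each minimizer $x^*\in X^*$, constants $K_{x^*}>0$ and $\varepsilon_{x^*}>0$ such that $K_{x^*}\,d(x,X^*)^r\le F(x)-F^*$ holds for every $x\in B(x^*,\varepsilon_{x^*})$. The difficulty is that a priori both $K_{x^*}$ and $\varepsilon_{x^*}$ may degenerate (tend to $0$) as $x^*$ ranges over $X^*$; compactness is exactly what rules this out.

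First I would cover $X^*$ by the balls $\{B(x^*,\varepsilon_{x^*}/2)\}_{x^*\in X^*}$. Since $X^*$ is compact, I extract a finite subcover $B(x_i^*,\varepsilon_i/2)$, $i=1,\dots,N$, writing $\varepsilon_i:=\varepsilon_{x_i^*}$ and $K_i:=K_{x_i^*}$. I then define the uniform constants $K:=\min_{1\le i\le N}K_i>0$ and $\varepsilon:=\tfrac12\min_{1\le i\le N}\varepsilon_i>0$; both are positive precisely because the minimum is taken over finitely many indices, which is where compactness is used.

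Next comes the gluing step. Let $x\in\R^n$ with $d(x,X^*)\le\varepsilon$. Because $X^*$ is compact, hence closed, the distance is attained at some $\bar x\in X^*$ with $\norm{x-\bar x}=d(x,X^*)\le\varepsilon$. This $\bar x$ lies in one of the covering balls, say $\bar x\in B(x_i^*,\varepsilon_i/2)$, and the triangle inequality gives $\norm{x-x_i^*}\le\norm{x-\bar x}+\norm{\bar x-x_i^*}<\varepsilon+\varepsilon_i/2\le\varepsilon_i$, so that $x\in B(x_i^*,\varepsilon_i)$. Applying the local estimate attached to $x_i^*$ yields $K_i\,d(x,X^*)^r\le F(x)-F^*$, and since $K\le K_i$ while $d(x,X^*)^r\ge0$, I conclude $K\,d(x,X^*)^r\le K_i\,d(x,X^*)^r\le F(x)-F^*$, which is the claimed uniform inequality.

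The only genuinely delicate point is reconciling the center-based local balls with the set-distance $d(\cdot,X^*)$ appearing in the conclusion: the half-radius covering, together with the existence of a nearest point in the closed set $X^*$, is what guarantees that every $x$ within distance $\varepsilon$ of $X^*$ actually falls inside one of the balls on which a local estimate is available. Everything else is bookkeeping, and the argument is essentially the adaptation of \cite[Lemma 1]{AttouchBolte2009} announced before the statement.
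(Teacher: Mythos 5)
Your proof is correct. The paper does not actually write out a proof of this lemma; it only remarks that the argument can be adapted from \cite[Lemma 1]{AttouchBolte2009}, and the uniformization you carry out --- extracting a finite subcover of the half-radius balls, taking the minimum of the finitely many local constants, and using the nearest-point projection onto the compact set $X^*$ together with the triangle inequality to land inside one of the full-radius balls --- is precisely the standard way to perform that adaptation. All the steps check out: the distance to $X^*$ is attained because $X^*$ is compact, the choice $\varepsilon=\tfrac12\min_i\varepsilon_i$ guarantees $\|x-x_i^*\|<\varepsilon+\varepsilon_i/2\leqslant\varepsilon_i$, and passing from $K_i$ to $K=\min_i K_i$ is harmless since $d(x,X^*)^r\geqslant 0$.
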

Typical examples of functions having the \L ojasiewicz property are real-analytic functions, $C^1$ subanalytic functions or semi-algebraic functions \cite{Loja63,Loja93}. Strongly convex functions satisfy a global \L ojasiewicz property with exponent $\theta=\frac{1}{2}$ \cite{AttouchBolte2009}, or equivalently a global version of the hypothesis $\hd(2)$, namely:
$$\forall x\in \R^n,  F(x)-F^*\geqslant \frac{\mu}{2}\|x-x^*\|^2,$$
where $\mu>0$ denotes the parameter of strong convexity and $x^*$ the unique minimizer of $F$. By extension, uniformly convex functions of order $p\geqslant 2$ satisfy the global version of the hypothesis $\hd(p)$ \cite{garrigos2017convergence}.

Let us now present two simple examples of convex differentiable functions to illustrate situations where the hypothesis $\hu$ and $\hd$ are satisfied. Let $\gamma > 1$ and consider the function defined by: $F:x\in \R\mapsto |x|^\gamma$. 
We easily check that $F$ satisfies the hypothesis $\hu(\gamma')$ for some $\gamma'\geq 1$ if and only if $\gamma'\in [1,\gamma]$. By definition, $F$ also naturally satisfies $\hd(r)$ if and only if $r\geqslant \gamma$. Same conditions on $\gamma'$ and $r$ can be derived without uniqueness of the minimizer for functions of the form:
\begin{equation}
F(x) = \left\{\begin{array}{ll}
\max(|x|-a,0)^\gamma &\mbox{ if } |x| \geqslant a,\\
0 &\mbox{otherwise,}
\end{array}\right.\label{ex2}
\end{equation}
with $a>0$, and whose set of minimizers is: $X^*=[-a,a]$, since conditions $\hu(\gamma)$ and $\hd(r)$ only make sense around the extremal points of $X^*$.

Let us now investigate the relation between the parameters $\gamma$ and $r$ in the general case: any  convex differentiable function $F$ satisfying both $\hu(\gamma)$ and $\hd(r)$, has to be at least as flat as $x\mapsto \|x\|^\gamma$ and as sharp as $x\mapsto \|x\|^r$ in the neighborhood of its minimizers. Combining the flatness condition $\hu(\gamma)$ and the growth condition $\hd(r)$, we consistently deduce:  
\begin{lemma}
If a convex differentiable function satisfies both $\hu(\gamma)$ and $\hd(r)$ then necessarily $r\geqslant \gamma$. \label{lem:geometry2}
\end{lemma}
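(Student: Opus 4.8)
The plan is to combine the two characterizations already established for $\hu(\gamma)$ and $\hd(r)$ into a single chain of inequalities valid near a minimizer, and then to extract the relation $r \geq \gamma$ by examining the behavior as $x \to x^*$. The key observation is that Lemma~\ref{lem:geometry} already gives us an \emph{upper} bound coming from the flatness hypothesis $\hu(\gamma)$, namely the existence of $M>0$ and $\eta>0$ with $F(x)-F^* \leqslant M\|x-x^*\|^\gamma$ on $B(x^*,\eta)$, while the growth condition $\hd(r)$ provides a \emph{lower} bound $K\, d(x,X^*)^r \leqslant F(x)-F^*$ on some ball $B(x^*,\varepsilon)$.

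First I would fix an arbitrary minimizer $x^* \in X^*$ and work on the ball $B(x^*,\rho)$ with $\rho := \min(\eta,\varepsilon)$, so that both bounds hold simultaneously. Since $d(x,X^*) \leqslant \|x-x^*\|$ by definition of the distance to the set, the lower bound remains valid if I replace $d(x,X^*)$ by the (possibly smaller) quantity. However I must be careful about the direction of the inequality here: I actually want to keep $d(x,X^*)$ on the left. The clean route is to choose test points $x$ lying along a ray emanating from $x^*$ in a direction for which $x^*$ is the nearest point of $X^*$, so that $d(x,X^*)=\|x-x^*\|$ exactly. Such a direction exists because $X^*$ is convex (being the argmin set of a convex function) and $x^*$ is a boundary point of it; taking an outward normal direction to $X^*$ at $x^*$ guarantees that moving a short distance along it keeps $x^*$ as the projection onto $X^*$. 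Along such points, chaining the two bounds yields
\begin{equation*}
K\|x-x^*\|^r \leqslant F(x)-F^* \leqslant M\|x-x^*\|^\gamma,
\end{equation*}
so that $K\|x-x^*\|^{r-\gamma} \leqslant M$ for all such $x$ with $0 < \|x-x^*\| < \rho$.

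The conclusion then follows by a limiting argument: letting $\|x-x^*\| \to 0^+$ along the chosen ray, the inequality $K\|x-x^*\|^{r-\gamma} \leqslant M$ can remain true only if the exponent $r-\gamma$ is nonnegative. Indeed, if we had $r < \gamma$, then $\|x-x^*\|^{r-\gamma} \to +\infty$ as $\|x-x^*\|\to 0^+$, contradicting the uniform bound by $M/K$. Hence $r \geqslant \gamma$, as claimed.

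The main obstacle, and the step deserving the most care, is the geometric selection of test directions guaranteeing $d(x,X^*)=\|x-x^*\|$. When $X^*$ is a singleton this is automatic, but in general one must verify that a direction exists along which $x^*$ remains the nearest point of $X^*$ for small displacements; I would justify this via the outward normal cone to the convex set $X^*$ at the boundary point $x^*$, which is nonempty precisely because $x^*$ lies on the boundary of $X^*$ (if $x^*$ were interior to $X^*$ it would be surrounded by minimizers and the flatness/sharpness comparison would be vacuous there). An alternative that sidesteps the projection subtlety entirely is to note that $\hu(\gamma)$ and $\hd(r)$ are assumptions made \emph{for any} minimizer and are genuinely local conditions around the extremal points of $X^*$, so it suffices to produce a single one-parameter family of points approaching $x^*$ for which both bounds apply with $\|x-x^*\|$ in place of $d(x,X^*)$; the limiting exponent comparison then closes the argument regardless of the detailed shape of $X^*$.
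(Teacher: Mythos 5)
Your argument is correct and is essentially the one the paper intends: Lemma~\ref{lem:geometry2} is stated there without an explicit proof, as a direct consequence of combining the upper bound $F(x)-F^*\leqslant M\|x-x^*\|^\gamma$ from Lemma~\ref{lem:geometry} with the lower bound in $\hd(r)$ and letting $x\to x^*$. Your additional care in choosing test points along an outward normal direction to the convex set $X^*$ at a boundary minimizer, so that $d(x,X^*)=\|x-x^*\|$ exactly, correctly supplies the one detail the paper glosses over (and the justification is sound: for $v$ in the normal cone $N_{X^*}(x^*)$ the projection of $x^*+tv$ onto $X^*$ is $x^*$ for every $t>0$).
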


Finally, we conclude this section by showing that an additional assumption of the Lipschitz continuity of the gradient provides additional information on the local geometry of $F$: indeed, for convex functions, the Lipschitz continuity of the gradient is equivalent to a quadratic upper bound on $F$:
\begin{equation}\label{lipschitz}
\forall (x,y)\in \R^n\times \R^n, ~F(x)-F(y) \leqslant \langle \nabla F(y),x-y \rangle + \frac{L}{2}\|x-y\|^{2}.
\end{equation}
Applying \eqref{lipschitz} at $y=x^*$, we then deduce:
\begin{equation}\label{lipschitz2}
\forall x\in \R^n,~F(x)-F^* \leqslant \frac{L}{2}\|x-x^*\|^{2},
\end{equation}
which indicates that $F$ is at least as flat as $\|x-x^*\|^2$ around $X^*$. More precisely:

\begin{lemma}
Let $F:\R^n \rightarrow \R$ be a convex differentiable function 
with a $L$-Lipschitz continuous gradient for some $L>0$. 
Assume also that $F$ satisfies the growth condition $\hd(2)$ for some constant $K>0$. Then $F$ 
automatically satisfies $\hu(\gamma)$ with $\gamma=1+\frac{K}{2L}\in(1,2]$.\label{lem:Lipschitz}
\end{lemma}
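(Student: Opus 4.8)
The plan is to fix an arbitrary minimizer $x^*\in X^*$ and to combine, on a single ball $B(x^*,\varepsilon)$, three ingredients: the smoothness inequality, the convexity inequality taken at the projection onto $X^*$, and the growth condition $\hd(2)$. First I would record the standard consequence of $L$-smoothness for convex functions: evaluating the inequality $F(y)\geqslant F(x)+\langle \nabla F(x),y-x\rangle + \frac{1}{2L}\|\nabla F(x)-\nabla F(y)\|^2$ at $y=x^*$, where $\nabla F(x^*)=0$, yields
\[
F(x)-F^* \leqslant \langle \nabla F(x),x-x^*\rangle - \frac{1}{2L}\|\nabla F(x)\|^2. \qquad (\star)
\]
This already has the right shape: the inner product $\langle \nabla F(x),x-x^*\rangle$ is exactly what appears in $\hu$, and it remains only to absorb the negative gradient term into a multiple of $F(x)-F^*$.

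Second, the key step is to produce a \L ojasiewicz-type lower bound $\|\nabla F(x)\|^2 \geqslant K\,(F(x)-F^*)$ on a neighborhood of $x^*$. To this end I would let $p(x)$ denote the Euclidean projection of $x$ onto the closed convex set $X^*$, so that $F(p(x))=F^*$, and use convexity in the form $F(x)-F^* \leqslant \langle \nabla F(x), x-p(x)\rangle \leqslant \|\nabla F(x)\|\, d(x,X^*)$. Invoking $\hd(2)$ to get $d(x,X^*)\leqslant \sqrt{(F(x)-F^*)/K}$ for $x\in B(x^*,\varepsilon)$, and then dividing by $\sqrt{F(x)-F^*}$ (the case $F(x)=F^*$ being trivial, as it forces $\nabla F(x)=0$), I obtain precisely $K(F(x)-F^*)\leqslant \|\nabla F(x)\|^2$.

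Third, substituting this lower bound into $(\star)$ gives $F(x)-F^*\leqslant \langle \nabla F(x),x-x^*\rangle - \frac{K}{2L}(F(x)-F^*)$, that is $\left(1+\frac{K}{2L}\right)(F(x)-F^*)\leqslant \langle \nabla F(x),x-x^*\rangle$, which is exactly $\hu(\gamma)$ with $\gamma=1+\frac{K}{2L}$. To check that $\gamma\in(1,2]$, the bound $\gamma>1$ is immediate from $K,L>0$, while $\gamma\leqslant 2$ follows by comparing the lower bound just obtained with the upper bound $\|\nabla F(x)\|^2\leqslant 2L(F(x)-F^*)$ coming from the descent lemma (minimizing the quadratic upper model over the gradient step); together, at any point with $F(x)>F^*$, they force $K\leqslant 2L$.

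The step I expect to require the most care is the second one, establishing the \L ojasiewicz inequality with the sharp constant $c^2=K$: it hinges on using the projection $p(x)$ rather than the fixed minimizer $x^*$ in the convexity inequality, so that the genuine distance $d(x,X^*)$ (and not merely $\|x-x^*\|$) appears and can be controlled by $\hd(2)$, and on ensuring the neighborhood on which $\hd(2)$ is applied is expressed in terms of $x$ itself. Everything else is a direct algebraic recombination of $(\star)$ with this lower bound.
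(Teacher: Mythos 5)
Your proof is correct and follows essentially the same route as the paper: the co-coercivity inequality $(\star)$ combined with the \L ojasiewicz-type bound $\|\nabla F(x)\|^2\geqslant K(F(x)-F^*)$ deduced from $\hd(2)$, then a direct algebraic recombination. The only difference is that you spell out the derivation of that bound (via the projection onto $X^*$ and convexity) and the verification that $\gamma\leqslant 2$, both of which the paper leaves to a citation or to the reader.
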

\begin{proof}
Since $F$ is convex with a Lipschitz continuous gradient, we have:
$$\forall (x,y)\in \R^n, F(y)-F(x)-\langle \nabla F(x),y-x\rangle \geqslant \frac{1}{2L}\|\nabla F(y)-\nabla F(x)\|^2,$$
hence:
$$\forall x\in \R^n, F(x)-F^*\leqslant \langle\nabla F(x),x-x^*\rangle -\frac{1}{2L}\|\nabla F(x)\|^2.$$
Assume in addition that $F$ satisfies the growth condition $\hd(2)$ for some constant $K>0$. Then $F$ has the \L ojasiewicz property with exponent $\theta=\frac{1}{2}$ and constant $c=\sqrt{K}$. Thus:
$$\left(1+\frac{K}{2L}\right)(F(x)-F^*) \leqslant\langle\nabla F(x),x-x^*\rangle,$$
in the neighborhood of its minimizers, which means that $F$ satisfies $\hu(\gamma)$ with $\gamma=1+\frac{K}{2L}$.
\end{proof}

\begin{remark}
Observe that Lemma \ref{lem:Lipschitz} can be easily extended to the case of convex differentiable functions with a $\nu$-H\"older continuous gradient. 
Indeed, let $F$ be a convex differentiable functions with a $\nu$-H\"older continuous gradient for some $\nu\geqslant 1$. If $F$ also satisfies the growth condition $\hd(1+\nu)$ (for some constant $K>0$), then $F$ automatically satisfies $\hu(\gamma)$ with $\gamma =1 + \frac{\alpha K}{(1+\nu)L^\frac{1}{\nu}}$. This result is based on a notion of generalized co-coercivity for functions having a H\"older continuous gradient.
\end{remark}

\section{Related results}\label{sec_state}
In this section, we recall some classical state of the art results on the convergence properties of the trajectories of the ODE \eqref{ODE}.

Let us first recall that as soon as $\f>0$,  $F(x(t))$ converges to $F^*$ \cite{AujolDossal,attouch2017rate}, but a larger value of $\f$ is required to show the convergence of the trajectory $x(t)$. More precisely, if $F$ is convex and $\alpha >3$, or if $F$ satisfies $\hu(\gamma)$ hypothesis and $\f>1+\frac{2}{\gamma}$ then:
\begin{equation*}
F(x(t))-F^*=o\left(\frac{1}{t^2}\right),
\end{equation*}
and the trajectory $x(t)$ converges (weakly in an infinite dimensional space)  to a minimizer $x^*$ of $F$ \cite{su2016differential,AujolDossal,may2015asymptotic}. This last point generalizes what is known on convex functions: thanks to the additional hypothesis $\hu(\gamma)$, the optimal decay $\frac{1}{t^2}$ can be achieved for a damping parameter $\alpha$ smaller that $3$.

In the sub-critical case (namely when $\alpha <3$), it has been proven in \cite{attouch2017rate,AujolDossal} that if $F$ is convex, the convergence rate is then given by:
\begin{equation}
F(x(t))-F^*=O\left(\frac{1}{t^\frac{2\alpha}{3}}\right),
\end{equation}
but we can no longer prove the convergence of the trajectory $x(t)$.

The purpose in this paper is to prove that by exploiting the geometry of the function $F$, better rates of convergence can be achieved for the values $F(x(t))-F^*$.

Consider first the case when $F$ is convex and $\alpha \leqslant 1+\frac{2}{\gamma}$. A first contribution  in this paper is to provide convergence rates for the values when $F$ only satisfies $\hu(\gamma)$. Although we can no longer prove the convergence of the trajectory $x(t)$, we still have the following convergence rate for $F(x(t))-F^*$:
\begin{equation}
F(x(t))-F^*=O\left(\frac{1}{t^{\frac{2\gamma\f}{2+\gamma}}}\right),
\end{equation}
and this decay is optimal and achieved for $F(x)=\vert x\vert^{\gamma}$ for any $\gamma\geqslant  1$. These results have been first stated and proved in the unpublished report \cite{AujolDossal} by Aujol and Dossal in 2017 for convex differentiable functions satisfying $(F-F^*)^\frac{1}{\gamma}$ convex. Observe that this decay is still valid for $\gamma=1$ i.e. with the sole assumption of convexity as shown in \cite{attouch2017rate}, and that the constant hidden in the big $O$ is explicit and available also for $\gamma<1$, that is for non-convex functions (for example for functions whose square is convex).

Consider now the case when $\alpha > 1+\frac{2}{\gamma}$. In that case, with the sole assumption $\hu(\gamma)$ on $F$ for some $\gamma \geqslant 1$, it is not possible to get a bound on the decay rate like $O(\frac{1}{t^\delta})$ with $\delta>2$. Indeed as shown in \cite[Example 2.12]{attouch2018fast}, for any $\eta>2$ and for a large friction parameter $\f$, the solution $x$ of the ODE associated to $F(x)=|x|^{\eta}$ satisfies:
$$F(x(t))-F^*=Kt^{-\frac{2\eta}{\eta-2}},$$ 
and the power $\frac{2\eta}{\eta-2}$ can be chosen arbitrary close to $2$. More conditions are thus needed to obtain a decay faster than $O\left(\frac{1}{t^2}\right)$, which is the uniform rate that can be achieved for $\alpha \geqslant 3$ for convex functions.

Our main contribution is to show that a flatness condition $\hu$ associated to classical sharpness conditions such as the \L ojasiewicz property provides new and better decay rates on the values $F(x(t))-F^*$, and to prove the optimality of these rates in the sense that they are achieved for instance for the function $F(x)=|x|^\gamma$, $x\in \R$, $\gamma\geqslant 1$.



We will then confront our results to well-known results in the literature. In particular we will focus on the case when $F$ is strongly convex or has a strong minimizer \cite{cabot2009long}. In that case, Attouch Chbani, Peypouquet and Redont in \cite{attouch2018fast} following Su, Boyd and Candes \cite{su2016differential} proved that for any $\f>0$ we have:
$$F(x(t))-F^*=O\left(t^{-\frac{2\alpha}{3}}\right),$$
(see also \cite{attouch2017rate} for more general viscosity term in that setting). In Section~\ref{sec_contrib}, we will prove the optimality of the power $\frac{2\alpha}{3}$ in \cite{attouch2016fast}, and that if $F$ has additionally  a Lipschitz gradient then the decay rate of $F(x(t))-F^*$ is always strictly better than $O\left(t^{-\frac{2\alpha}{3}}\right)$.

Eventually several results about the convergence rate of the solutions of ODE associated to the classical gradient descent : 
\begin{equation}\label{EDOGrad}
\dot{x}(t)+\nabla F(x(t))=0,
\end{equation}
or the ODE associated to the heavy ball method 
\begin{equation}\label{EDO}
\ddot{x}+\alpha\dot x(t)+\nabla F(x(t))=0
\end{equation}
under geometrical conditions such that the \L ojasiewicz property have been proposed, see for example Polyak-Shcherbakov~\cite{polyak2017lyapunov}. The authors prove that if the function $F$ satisfies $\hd(2)$ and some other conditions, the decay of $F(x(t))-F^*$ is exponential for the solutions of both previous equations. These rates are the continuous counterparts of the exponential decay rate of the classical gradient descent algorithm and the heavy ball method algorithm for strongly convex functions.  

In the next section we will prove that this exponential rate is not true for solutions of \eqref{ODE} even for quadratic functions, and we will prove that from an optimization point of view, the classical Nesterov acceleration may be less efficient than the classical gradient descent.

\section{Contributions}\label{sec_contrib}
In this section, we state the optimal convergence rates that can be achieved when $F$ satisfies hypotheses such as  $\hu(\gamma)$ and/or $\hd(r)$.
The first result gives optimal control for functions whose geometry is sharp :
\begin{theorem}\label{Theo1}
Let $\gamma\geqslant  1$ and $\alpha >0$. If $F$ satisfies $\hu(\gamma)$ and if $\alpha\leqslant 1+\frac{2}{\gamma}$ then:  
\begin{equation*}
F(x(t))-F^*=O\left(\frac{1}{t^{\frac{2\gamma\f}{\gamma+2}}}\right).
\end{equation*}
\end{theorem}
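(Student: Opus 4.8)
The plan is to control $F(x(t))-F^*$ by a Lyapunov function tailored to the target exponent $p:=\frac{2\gamma\f}{\gamma+2}$. Fix a minimizer $x^*\in X^*$ and write $h(t)=F(x(t))-F^*$ and $u(t)=x(t)-x^*$. Setting $\kappa=\frac{2\f}{\gamma+2}$, I would introduce
\[
\EE(t)=t^{p}h(t)+\frac{t^{p}}{2}\norm{\dot x(t)+\frac{\kappa}{t}\,u(t)}^2+\frac{c_0}{2}\,t^{p-2}\norm{u(t)}^2,
\]
where $c_0=\kappa\big(1+\tfrac2\gamma-\f\big)\tfrac{\gamma}{\gamma+2}$. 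The very nonnegativity of this last coefficient already encodes the hypothesis $\f\le 1+\frac2\gamma$, so that $\EE(t)\ge t^{p}h(t)\ge0$. Once $\EE$ is shown to be nonincreasing, the chain $t^{p}h(t)\le\EE(t)\le\EE(t_0)$ gives $h(t)=O(t^{-p})$, which is exactly the claim.

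To prove $\dot\EE\le0$ I would differentiate and substitute $\ddot x=-\frac{\f}{t}\dot x-\nabla F(x)$ from \eqref{ODE}. The weight $t^{p}$ in front of $\frac12\norm{\dot x+\frac{\kappa}{t}u}^2$ is chosen to match the weight of $h$, so that the two contributions in $\ps{\nabla F(x)}{\dot x}$ cancel exactly; the only surviving gradient term is then $-\kappa t^{p-1}\ps{\nabla F(x)}{u}$, which I bound from below through $\hu(\gamma)$ in the form $\ps{\nabla F(x)}{u}\ge\gamma\,h$. After these substitutions the mixed terms $\ps{u}{\dot x}$ are engineered to cancel, and one is left with
\[
\dot\EE(t)\le\big(p-\gamma\kappa\big)t^{p-1}h(t)+\big(\kappa+\tfrac{p}{2}-\f\big)t^{p-1}\norm{\dot x}^2+\tfrac12\kappa(\f-m)(m-1)\,t^{m-2}\norm{u}^2,
\]
with $m=p-1$. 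The point of the specific value $\kappa=\frac{2\f}{\gamma+2}$ is that it makes $p-\gamma\kappa=0$ and $\kappa+\frac{p}{2}-\f=0$ simultaneously, so the first two terms vanish identically and only the $\norm{u}^2$ term remains.

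It then suffices to check the sign of $\kappa(\f-m)(m-1)$. A short computation gives $\f-m>0$ for every admissible $\f$, while $m-1=p-2\le0$ is equivalent to $p\le2$, i.e. precisely to $\f\le1+\frac2\gamma$; this is where the hypothesis enters. Hence under the theorem's assumption the last term is $\le0$, $\EE$ is nonincreasing, and the rate follows. I expect the genuine obstacle to be not this algebra but the legitimacy of invoking $\hu(\gamma)$ along the whole trajectory: $\hu(\gamma)$ is local, so $\ps{\nabla F(x(t))}{x(t)-x^*}\ge\gamma h(t)$ is only guaranteed once $x(t)$ enters the neighborhood of $X^*$ where the hypothesis holds. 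I would secure this from the known convergence $F(x(t))\to F^*$ for $\f>0$, together with coercivity (as in the optimality examples $F(x)=|x|^\gamma$), which forces $d(x(t),X^*)\to0$; one may then replace the fixed $x^*$ by $\mathrm{proj}_{X^*}(x(t))$, noting that the gradient of $\tfrac12 d(\cdot,X^*)^2$ equals $x-\mathrm{proj}_{X^*}(x)$, so that the $\norm{u}^2$ term differentiates correctly without differentiating the projection. Controlling the velocity coupling under such a moving minimizer is the delicate part, and the argument is cleanest in the globally convex setting where $(F-F^*)^{1/\gamma}$ is convex, in which a single $x^*$ works throughout.
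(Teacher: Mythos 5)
Your proposal is correct and follows essentially the same route as the paper: your $\EE$ is exactly the paper's $\HH(t)=t^{p}\mathcal{E}(t)$ with $\lambda=\frac{2\f}{\gamma+2}$, $\xi=\lambda(\lambda+1-\f)$ and (paper's) $p=\frac{2\gamma\f}{\gamma+2}-2$; the same two coefficients vanish by this choice of parameters, and your surviving coefficient of $\norm{x(t)-x^*}^2$ coincides with the paper's $K_1$ and is nonpositive precisely when $\f\leqslant 1+\frac{2}{\gamma}$, yielding monotonicity of the energy and the rate. The locality caveat you raise about invoking $\hu(\gamma)$ along the whole trajectory is likewise left implicit in the paper's proof, so it does not mark a divergence between the two arguments.
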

\begin{figure}[h]
\includegraphics[width=\textwidth]{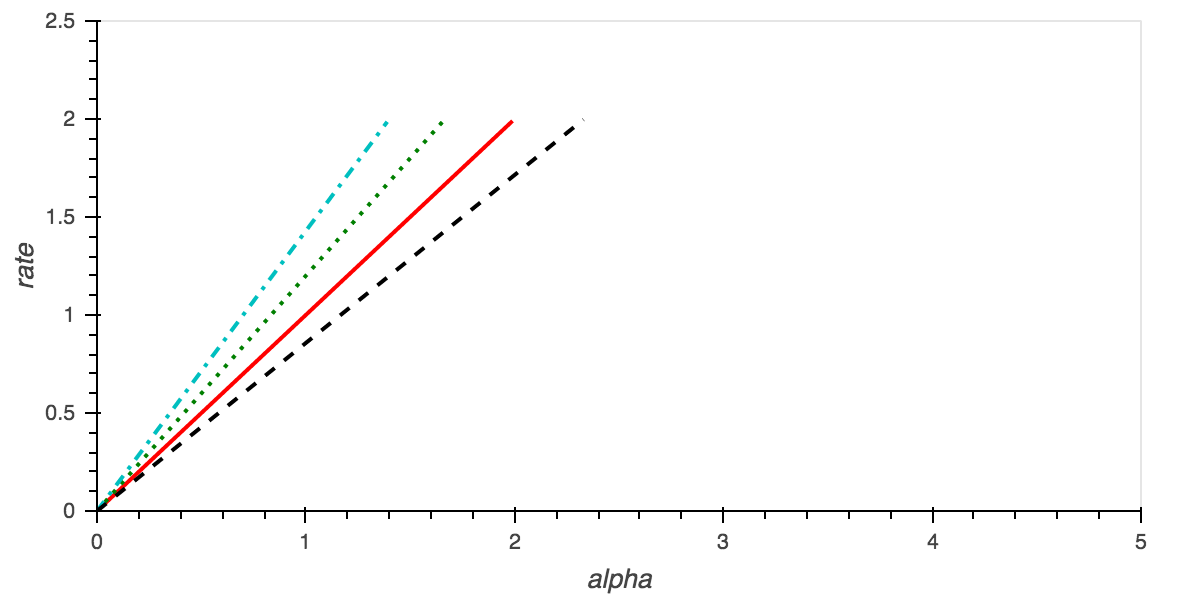}
\caption{Decay rate $r(\alpha,\gamma)=\frac{2\alpha\gamma}{\gamma+2}$ depending on $\alpha$ when $\alpha\leqslant 1+\frac{2}{\gamma}$ and when $F$ satisfies $\hu(\gamma)$ (as in Theorem \ref{Theo1}) for four values $\gamma$: 
$\gamma_1=1.5$ dashed line, $\gamma_2=2$,  solid line, $\gamma_3=3$ dotted line and $\gamma_4=5$ dashed-dotted line.}
\end{figure}

Note that a proof of the Theorem \ref{Theo1} has been proposed in the unpublished report \cite{AujolDossal}. The obtained decay is proved to be optimal in the sense that it can be achieved for some explicit functions $F$ for any $\gamma <1$. As a consequence one cannot expect a $o(t^{-\frac{2\gamma\f}{\gamma+2}})$ decay when $\alpha <1+\frac{2}{\gamma}$.

Let us now consider the case when $\alpha > 1+\frac{2}{\gamma}$. The second result in this paper provides optimal convergence rates for functions whose geometry is sharp, with a large friction coefficient:
\begin{theorem}\label{Theo1b}
Let $\gamma\geqslant  1$ and 
$\alpha >0$. If $F$ satisfies $\hu(\gamma)$ and $\hd(2)$ for some $\gamma\leqslant 2$, if $F$ has a unique minimizer and if $\alpha>1+\frac{2}{\gamma}$ then
\begin{equation*}\label{eqTheo1}
F(x(t))-F^*=O\left(\frac{1}{t^{\frac{2\gamma\f}{\gamma+2}}}\right).
\end{equation*} 
Moreover this decay is optimal in the sense that for any $\gamma\in(1,2]$ this rate is achieved for the function $F(x)=\vert x\vert^\gamma$.
\end{theorem}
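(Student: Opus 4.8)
The plan is to build a time-dependent Lyapunov function tailored to the target exponent and to use the two geometric hypotheses at distinct stages: $\hu(\gamma)$ to annihilate the leading-order terms, and $\hd(2)$ to close a Gronwall estimate on the remainder. Set $\beta=\frac{\gamma\alpha}{\gamma+2}$ (so that the claimed rate reads $t^{-2\beta}$) and $\lambda=\frac{2\alpha}{\gamma+2}$, and define
\begin{equation*}
\EE(t)=t^{2\beta}\bigl(F(x(t))-F^*\bigr)+\frac12\norm{\lambda t^{\beta-1}(x(t)-x^*)+t^{\beta}\dot x(t)}^2.
\end{equation*}
First I would differentiate $\EE$, substitute $t\ddot x=-\alpha\dot x-t\nabla F(x)$ from \eqref{ODE}, and note that the two terms $\pm t^{2\beta}\ps{\nabla F(x)}{\dot x}$ cancel. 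The coefficient of $t^{2\beta-1}\norm{\dot x}^2$ equals $\beta-\alpha+\lambda$, which vanishes precisely because $\lambda=\alpha-\beta=\frac{2\alpha}{\gamma+2}$; and invoking $\hu(\gamma)$ as $\ps{\nabla F(x)}{x-x^*}\geqslant\gamma(F(x)-F^*)$, the coefficient of $t^{2\beta-1}(F-F^*)$ becomes $2\beta-\lambda\gamma=0$. After these cancellations only the terms in $\norm{x-x^*}^2$ survive, giving
\begin{equation*}
\dot\EE(t)=\frac{\lambda(\beta-1)}2\,t^{2\beta-2}\frac{d}{dt}\norm{x(t)-x^*}^2+\lambda^2(\beta-1)\,t^{2\beta-3}\norm{x(t)-x^*}^2.
\end{equation*}

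Next I would absorb the total-derivative term by passing to the corrected energy $\tilde\EE(t)=\EE(t)-\frac{\lambda(\beta-1)}2\,t^{2\beta-2}\norm{x-x^*}^2$, for which a short computation yields
\begin{equation*}
\dot{\tilde\EE}(t)=\lambda(\beta-1)(\lambda-\beta+1)\,t^{2\beta-3}\norm{x(t)-x^*}^2.
\end{equation*}
The two hypotheses on $\alpha$ and $\gamma$ enter decisively through the signs here: the condition $\alpha>1+\frac2\gamma$ gives $\beta>1$, while $\gamma\leqslant 2$ gives $\lambda-\beta+1=1+\frac{\alpha(2-\gamma)}{\gamma+2}>0$, so the right-hand side is nonnegative. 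As it is not obviously integrable on its own, I would now call on the growth condition $\hd(2)$, available at the \emph{unique} minimizer $x^*$ in the form $\norm{x-x^*}^2\leqslant\frac1K(F(x)-F^*)$, to rewrite the bound as $\dot{\tilde\EE}(t)\leqslant\frac CK\,t^{-3}\,t^{2\beta}(F(x)-F^*)$ with $C=\lambda(\beta-1)(\lambda-\beta+1)$.

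The final step is a bootstrap. Since $t^{2\beta-2}\norm{x-x^*}^2\leqslant\frac1K\,t^{-2}\,t^{2\beta}(F-F^*)$, the correction term is of lower order, so for $t$ large one has $t^{2\beta}(F-F^*)\leqslant 2\tilde\EE(t)$; feeding this back produces the Gronwall inequality $\dot{\tilde\EE}(t)\leqslant\frac{2C}K\,t^{-3}\tilde\EE(t)$. Because $\int^{\infty}t^{-3}\,dt<\infty$, the energy $\tilde\EE$ remains bounded, whence $t^{2\beta}(F(x(t))-F^*)=O(1)$, which is exactly the announced rate $O\bigl(t^{-\frac{2\gamma\alpha}{\gamma+2}}\bigr)$. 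For the optimality claim I would treat the model $F(x)=|x|^\gamma$, $\gamma\in(1,2]$, which satisfies $\hu(\gamma)$ and, since $\gamma\leqslant2$, also $\hd(2)$ near $0$; for $\gamma=2$ the scalar equation is a Bessel-type ODE whose solutions oscillate and decay like $t^{-\alpha/2}$, giving $F(x(t))-F^*\sim t^{-\alpha}=t^{-\frac{2\gamma\alpha}{\gamma+2}}$, and for general $\gamma\in(1,2)$ I would establish the matching lower bound by a direct asymptotic analysis of the nonlinear scalar ODE.

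I expect the main obstacle to be twofold. The first is identifying the exact pair $(\beta,\lambda)$ that forces the simultaneous vanishing of the $(F-F^*)$ and $\norm{\dot x}^2$ contributions: this double cancellation is what makes the computation tractable, but it hinges on the specific algebra of the exponents and is not a priori evident. The second, and more serious, is that after these cancellations the surviving term carries the \emph{wrong} sign, so the corrected energy is nondecreasing and plain monotonicity fails; it is exactly the sharpness condition $\hd(2)$ that converts the residual into an integrable $t^{-3}$ Gronwall feedback, which is why quadratic growth is indispensable in this supercritical regime.
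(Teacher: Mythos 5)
Your upper-bound argument is correct and is, after unwinding the reparametrization, the paper's own proof. With $\beta=\frac{\gamma\alpha}{\gamma+2}$ and $p=2\beta-2$, your corrected energy $\tilde{\EE}$ is exactly the paper's $\HH(t)=t^p\EE(t)$ with $\xi=\lambda(\lambda+1-\f)=-\lambda(\beta-1)$: the double cancellation you describe (of the $\norm{\dot x}^2$ and $F-F^*$ coefficients) is the content of Lemma~\ref{LemmeFonda} for this choice of $(p,\lambda)$, your residual coefficient $\lambda(\beta-1)(\lambda-\beta+1)$ equals $K_1/2$ from \eqref{eqdefK1}, and the $\hd(2)$-driven Gr\"onwall bootstrap is the paper's chain \eqref{eqct}--\eqref{eqat}. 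Two cosmetic points: once $\hu(\gamma)$ has been invoked, your formula for $\dot{\tilde{\EE}}$ should be an inequality $\leqslant$ rather than an equality (which is all you use anyway), and one should say a word about why $x(t)$ eventually remains in the neighbourhood where the local hypotheses $\hu(\gamma)$ and $\hd(2)$ hold — though the paper is equally terse on this.

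The genuine gap is in the optimality claim. For $\gamma=2$ the Bessel asymptotics do give $F(x(t))-F^*\sim t^{-\alpha}$, but for $\gamma\in(1,2)$ the equation $\ddot x+\frac{\f}{t}\dot x+\gamma|x|^{\gamma-2}x=0$ admits no closed form, and ``a direct asymptotic analysis of the nonlinear scalar ODE'' is precisely the step you cannot take for granted: the solution oscillates with an amplitude-dependent frequency and a WKB-type ansatz is not justified without work. The paper's Proposition~\ref{prop_optimal} avoids this entirely: for $F=|x|^\gamma$ the Lyapunov inequality is an equality, one shows $\HH(t)\geqslant\ell>0$ for $t$ large (by a Gr\"onwall lower bound when $K_1\leqslant 0$, by monotonicity when $K_1>0$), and then, since $y(t)=t^\lambda x(t)$ is bounded, Lemma~\ref{lemmatech} produces arbitrarily large times $t_1$ at which $b(t_1)\leqslant\frac{\ell}{4t_1^{p+1}}$, forcing the energy to concentrate on $a(t_1)$ and yielding $F(x(t_1))-F^*\geqslant\frac{\ell}{4}\,t_1^{-\frac{2\gamma\f}{\gamma+2}}$. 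You would need either to reproduce a soft argument of this type or to actually carry out the nonlinear asymptotics, which is substantially harder; as written, the optimality part for $\gamma<2$ is a plan rather than a proof.
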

\begin{figure}[h]
\includegraphics[width=\textwidth]{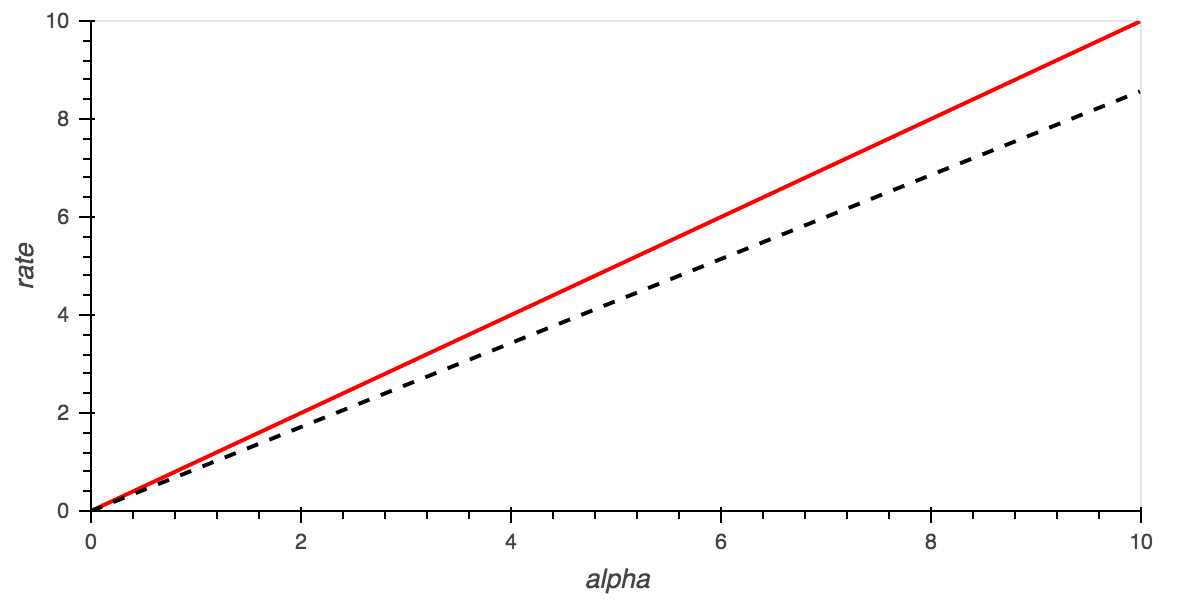}
\caption{Decay rate $r(\alpha,\gamma)=\frac{2\alpha\gamma}{\gamma+2}$ depending on the value of $\alpha$ when $F$ satisfies $\hu(\gamma)$ and $\hd(2)$ (as in Theorem \ref{Theo1b}) with 
$\gamma\leqslant 2$ for two values $\gamma$ : 
$\gamma_1=1.5$ dashed line, $\gamma_2=2$,  solid line.}
\end{figure}
Note that Theorem~\ref{Theo1b} only applies for $\gamma\leqslant 2$, since there is no function that satisfies both conditions $\hu(\gamma)$ with $\gamma>2$ and $\hd(2)$ (see Lemma \ref{lem:geometry2}). The optimality of the convergence rate result is precisely stated in the next Proposition:
\begin{proposition} \label{prop_optimal}
Let $\gamma\in (1,2]$.
Let us assume that $\alpha>0$. Let $x$ be a solution of \eqref{ODE} with $F(x)=\vert x\vert^{\gamma}$, $|x(t_0)|<1$ and $\dot{x}(t_0)=0$ where $t_0>\sqrt{\max(0,\frac{\alpha \gamma(\alpha -1-2/\gamma)}{(\gamma +2)^2})}$. There exists $K >0$ such that for any $T>0$, there exists $t \geqslant T$ such that
\begin{equation}
F(x(t))-F^* \geqslant \frac{K}{t^{\frac{2 \gamma \alpha}{\gamma +2}}}.
\end{equation}
\end{proposition}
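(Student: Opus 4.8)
The plan is to prove the equivalent statement $\limsup_{t\to+\infty}t^{\beta}|x(t)|>0$, where $\beta:=\frac{2\alpha}{\gamma+2}$. Indeed $F^*=0$ and $F(x(t))-F^*=|x(t)|^{\gamma}$, while $\frac{2\gamma\alpha}{\gamma+2}=\gamma\beta$, so $t^{\frac{2\gamma\alpha}{\gamma+2}}(F(x(t))-F^*)=(t^{\beta}|x(t)|)^{\gamma}$; a positive $\limsup$ of the right-hand side yields, for every $T$, a time $t\geqslant T$ with $F(x(t))-F^*\geqslant K t^{-\frac{2\gamma\alpha}{\gamma+2}}$ for a suitable $K>0$. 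Equivalently, writing $z(t):=t^{\beta}x(t)$, it is enough to show that $z$ does not converge to $0$. Throughout I work in dimension one with $\nabla F(x)=\gamma|x|^{\gamma-1}\operatorname{sgn}(x)$, and I set $\rho:=\gamma\beta=\frac{2\gamma\alpha}{\gamma+2}$ and $k:=2\alpha-\rho=\frac{4\alpha}{\gamma+2}>0$.

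The engine of the proof will be two exact identities. First, the energy $E(t):=\frac12\dot x(t)^2+|x(t)|^{\gamma}$ satisfies $\dot E=\dot x(\ddot x+\nabla F(x))=-\frac{\alpha}{t}\dot x^2$, hence $\frac{d}{dt}\big(t^{2\alpha}E\big)=2\alpha t^{2\alpha-1}|x|^{\gamma}\geqslant 0$; since $\dot x(t_0)=0$ this gives $t^{2\alpha}E(t)\geqslant C_0:=t_0^{2\alpha}|x(t_0)|^{\gamma}>0$, so the solution never loses all its energy. Second, a virial computation starting from $\frac{d}{dt}(x\dot x)=\dot x^2+x\ddot x$ together with $\dot x^2=2E-2|x|^{\gamma}$ yields
\begin{equation*}
(\gamma+2)\,|x|^{\gamma}=2E-\frac{\alpha}{t}x\dot x-\frac{d}{dt}(x\dot x),
\end{equation*}
the quantitative form of the relation $\langle\text{potential}\rangle=\frac{2}{\gamma+2}\langle\text{energy}\rangle$ for oscillations in the potential $|x|^{\gamma}$.

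I would then combine these two identities. Integrating the energy identity gives $t^{2\alpha}E(t)=C_0+2\alpha\int_{t_0}^{t}s^{2\alpha-1}|x|^{\gamma}\,ds$; substituting the virial expression for $|x|^{\gamma}$ and integrating the term $\frac{d}{ds}(x\dot x)$ by parts reduces everything to the scalar function $J(t):=\int_{t_0}^{t}s^{2\alpha-1}E(s)\,ds$, for which $t^{2\alpha}E(t)=tJ'(t)$. Using the a priori upper bound of Theorem~\ref{Theo1b} applied to $F(x)=|x|^{\gamma}$, namely $|x(t)|=O(t^{-\beta})$ and hence $|\dot x(t)|=O(t^{-\rho/2})$, all boundary and remainder terms produced by the integration by parts are $O(t^{\,k-\delta})$ for some $\delta\geqslant 1$. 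This leaves the single linear relation
\begin{equation*}
tJ'(t)=k\,J(t)+C_0+O\!\left(t^{\,k-\delta}\right),
\end{equation*}
whose integration (the correction being absolutely integrable after division by $s^{k+1}$) gives $J(t)=L\,t^{k}(1+o(1))$ with $L\geqslant 0$; if $L>0$ then $E(t)\sim kL\,t^{-\rho}$, and since the virial identity forbids $|x|^{\gamma}$ from staying negligible compared with $E$ over a full oscillation, this forces $\limsup_{t}t^{\rho}|x(t)|^{\gamma}>0$, which is the claim.

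The main obstacle is to rule out the degenerate case $L=0$, in which the accumulated correction exactly cancels the leading constant $C_0$ and $z(t)$ could in principle collapse to $0$; this is precisely where the hypothesis on $t_0$ enters. The clean way to close it is through the rescaling $z=t^{\beta}x$, which turns \eqref{ODE} into a perturbed nonlinear oscillator $\ddot z+\frac{\alpha-2\beta}{t}\dot z+\frac{\beta(\beta+1-\alpha)}{t^{2}}z+\gamma\,t^{\beta(2-\gamma)}|z|^{\gamma-1}\operatorname{sgn}(z)=0$. Here the friction coefficient $\alpha-2\beta=\alpha\frac{\gamma-2}{\gamma+2}\leqslant 0$ is non-positive for $\gamma\leqslant 2$, so the rescaled dynamics has no dissipation, while the restoring force is positive provided the $t^{-2}$ term does not overwhelm it near $t_0$; a direct check shows that $t_0^{2}>-\tfrac12\beta(\beta+1-\alpha)=\frac{\alpha\gamma(\alpha-1-2/\gamma)}{(\gamma+2)^{2}}$ is exactly the condition guaranteeing a positive effective restoring coefficient on all of $[t_0,+\infty)$ (for $\gamma=2$ this is literally $2+\beta(\beta+1-\alpha)t^{-2}>0$). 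A Sturm-type oscillation argument—or, for $\gamma<2$, an adiabatic-invariant estimate for the slowly modulated oscillator—then shows that the amplitude of $z$ cannot tend to $0$, i.e. $z\not\to 0$ and $L>0$. I expect this non-collapse step, and in particular obtaining the sharp exponent $\rho$ rather than $\rho+\varepsilon$, to be the delicate part, since a cruder monotone Lyapunov function of the form $t^{2\alpha}\big(E+\frac{\beta}{t}x\dot x\big)$ is easily shown to be nondecreasing under the same $t_0$ condition but only yields the far weaker bound $E\gtrsim t^{-2\alpha}$.
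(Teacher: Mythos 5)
Your reduction to showing $\limsup_{t\to\infty}t^{\beta}|x(t)|>0$, the monotonicity of $t^{2\alpha}E(t)$, and the virial identity are all correct, and the bookkeeping leading to $tJ'(t)=kJ(t)+C_0+O(t^{k-\delta})$ and $t^{-k}J(t)\to L\geqslant 0$ can be made rigorous (one caveat: $|\dot x(t)|=O(t^{-\rho/2})$ does not follow from $|x(t)|=O(t^{-\beta})$ alone; you need the boundedness of the full Lyapunov function $\HH$, which requires invoking Theorem~\ref{Theo1} when $\alpha\leqslant 1+2/\gamma$ and Theorem~\ref{Theo1b} otherwise, since the proposition covers all $\alpha>0$). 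The genuine gap is the exclusion of the degenerate case $L=0$, i.e.\ the proof that $z(t)=t^{\beta}x(t)$ does not collapse to $0$ --- but that non-collapse \emph{is} the content of the proposition. At this point you only assert that ``a Sturm-type oscillation argument'' or ``an adiabatic-invariant estimate'' finishes the job. Neither is carried out, and neither is routine: Sturm theory locates zeros of linear equations and gives no amplitude lower bound, while in your rescaled oscillator the anti-damping $\alpha(\gamma-2)/(\gamma+2)\leqslant 0$ (which pumps energy in) competes against the growing stiffness $t^{\beta(2-\gamma)}$ of the restoring term (which, by adiabatic invariance, shrinks the amplitude); deciding which effect wins is exactly the quantitative balance at stake. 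Moreover your identification of the hypothesis on $t_0$ with $t_0^2>-\tfrac12\beta(\beta+1-\alpha)$ is literal only for $\gamma=2$; for $\gamma<2$ the ``effective restoring coefficient'' is amplitude-dependent and the criterion is heuristic.

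For comparison, the paper closes this step with no oscillation analysis at all. For $F(x)=|x|^{\gamma}$ the inequality of Lemma~\ref{LemmeFonda} is an equality, so $\HH'(t)=K_1t^{p}c(t)$ exactly with $p=\frac{2\gamma\alpha}{\gamma+2}-2$, $\lambda=\frac{2\alpha}{\gamma+2}$. When $\alpha>1+2/\gamma$ one has $K_1>0$, so $\HH$ is nondecreasing and the hypothesis on $t_0$ serves only to guarantee $\HH(t_0)>0$; when $\alpha\leqslant 1+2/\gamma$, $\HH$ is nonincreasing but $|\HH'|\leqslant |K_1|\HH/t^3$ and Gr\"onwall gives the same conclusion. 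In both cases $\HH(t)\geqslant\ell>0$, hence $a(t)+b(t)\geqslant \ell/(2t^{p+1})$ for large $t$. Lemma~\ref{lemmatech} applied to $y(t)=t^{\lambda}x(t)$ then produces arbitrarily large times $t_1$ at which $b(t_1)\leqslant\ell/(4t_1^{p+1})$, forcing $a(t_1)\geqslant\ell/(4t_1^{p+1})$, i.e.\ the claimed lower bound. If you want to keep your energy/virial route, you must replace the asserted oscillation step by a quantitative lower bound of this type; the natural fix is precisely the uniform bound $\HH(t)\geqslant\ell$ combined with the existence of times where the cross term is negligible.
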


Let us make several observations: first, to apply Theorem~\ref{Theo1b}, more conditions are needed than for Theorem~\ref{Theo1}: the hypothesis $\hd(2)$ and the uniqueness of the minimizer are needed to prove a decay faster than $O(\frac{1}{t^2})$, which is the uniform rate than can be achieved with $\alpha\geqslant  3$ for convex functions \cite{su2016differential}. The uniqueness of the minimizer is crucial in the proof of Theorem~\ref{Theo1b}, but it is still an open problem to know if this uniqueness is a necessary condition. In particular, observe that if $\dot x(t_0)=0$, then for all $t \geqslant t_0$, $x(t)$ belongs to $x_0+ {\rm Im} (\nabla F) $ 
where ${\rm Im} (\nabla F) $ stands for the vector space generated by $\nabla F (x)$ for all $x$ in $\R^n$.
As a consequence, Theorem~\ref{Theo1b} still holds true as long as the assumptions are valid in  $x_0+ {\rm Im} (\nabla F) $. 
\begin{remark}[The Least-Square problem]
Let us consider the
classical Least-Square problem defined by:
$$\displaystyle\min_{x\in \R^n}F(x):=\frac{1}{2}\|Ax-b\|^2,$$
where $A$ is a linear operator and $b\in \R^n$.
If $\dot x(t_0)=0$, then for all $t \geqslant t_0$, we have thus that
$x(t)$ belongs to the affine subspace $x_0+{\rm Im}(A^*)$. Since we have uniqueness of the solution on $x_0+{\rm Im}(A^*)$, Theorem~\ref{Theo1b} can be applied.
\end{remark}

We can also remark that if $F$ is a quadratic function in the neighborhood of $x^*$,  then $F$ satisfies $\hu(\gamma)$ for any $\gamma \in [1,2]$. Consequently, 
Theorem~\ref{Theo1b}  applies with $\gamma=2$ and thus:
\begin{equation*}
F(x(t))-F^*=O\left(\frac{1}{t^{\f}}\right).
\end{equation*} 
Observe that the optimality result provided by the Proposition~\ref{prop_optimal} ensures that we cannot expect an exponential decay of $F(x(t))-F^*$ for quadratic functions whereas this exponential decay can be achieved for the ODE associated to Gradient descent or Heavy ball method \cite{polyak2017lyapunov}.

Likewise, if $F$ is a convex differentiable function with a Lipschitz continuous gradient, and if $F$ satisfies the growth condition $\hd(2)$, then $F$ automatically satisfies the assumption $\hu(\gamma)$ with some $1<\gamma\leqslant 2$ as shown by Lemma~\ref{lem:Lipschitz}, and Theorem~\ref{Theo1b} applies with $\gamma>1$. 

Finally if $F$ is strongly convex or has a strong minimizer, then $F$ naturally satisfies $\hu(1)$ and a global version of $\hd(2)$. Since we prove the optimality  of the decay rates given by Theorem~\ref{Theo1b}, a consequence of this work is also the optimality of the power $\frac{2\alpha}{3}$ in \cite{attouch2016fast} for strongly convex functions and functions having a strong minimizer.

In both cases, we thus obtain convergence rates which are strictly better than $O(t^{-\frac{2\f}{3}})$ that is proposed for strongly convex functions by Su et al. \cite{su2016differential} and Attouch et al. \cite{attouch2018fast}. Finally it is worth noticing that the decay for strongly convex functions is not exponential while it is the case for the classical gradient descent scheme (see e.g. \cite{garrigos2017convergence}). This shows that applying the classical Nesterov acceleration on convex functions without looking more at the geometrical properties of the objective functions may lead to sub-optimal algorithms.

Let us now focus on flat geometries i.e. geometries associated to $\gamma>2$. Note that the uniqueness of the minimizer is not need anymore:
\begin{theorem}\label{Theo2}
Let $\gamma_1>2$ and $\gamma_2 >2$. Assume that $F$ is coercive and satisfies $\hu(\gamma_1)$ and $\hd(\gamma_2)$ with $\gamma_1\leqslant \gamma_2$. If $\f\geqslant  \frac{\gamma_1+2}{\gamma_1-2}$  then we have: 
\begin{equation*}\label{eqTheo2}
F(x(t))-F^*=O\left(\frac{1}{t^{\frac{2\gamma_2}{\gamma_2-2}}}\right).
\end{equation*} 
\end{theorem}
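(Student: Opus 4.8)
The plan is to run a time-weighted Lyapunov analysis, generalizing the weight $t^{2}$ of the classical $O(t^{-2})$ theory to the sharper exponent $p:=\frac{2\gamma_2}{\gamma_2-2}$ dictated by the growth condition. First I would localize. Since $F$ is coercive and convex with $F(x(t))\to F^*$ for every $\f>0$, the trajectory is bounded and $d(x(t),X^*)\to 0$; moreover the standing hypotheses force $\f\geq\frac{\gamma_1+2}{\gamma_1-2}>1+\frac{2}{\gamma_1}$, so by the convergence results recalled in Section~\ref{sec_state} the trajectory converges to a single minimizer $x^*$, which I fix once and for all. For $t$ large enough $x(t)$ then lies in the neighborhood of $x^*$ where both $\hu(\gamma_1)$ and $\hd(\gamma_2)$ are available, and since the statement is purely asymptotic I may work only for such $t$.

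Next I would introduce
\begin{equation*}
\EE(t)=t^{p}\big(F(x(t))-F^*\big)+\tfrac12\,t^{p-2}\norm{(\f-1)(x(t)-x^*)+t\dot x(t)}^2,\qquad p=\frac{2\gamma_2}{\gamma_2-2}.
\end{equation*}
The weight is chosen from the self-similar ansatz $\norm{x(t)-x^*}\sim t^{-a}$ with $a=\frac{2}{\gamma_2-2}$, which balances the three terms of \eqref{ODE} and makes both summands of $\EE$ of the same constant order, since $p-2=2a$. Differentiating, replacing $\ddot x$ through \eqref{ODE}, the terms in $\ps{\nabla F}{\dot x}$ cancel; applying $\hu(\gamma_1)$ in the form $\ps{\nabla F(x)}{x-x^*}\geq\gamma_1(F(x)-F^*)$ and regrouping, I expect to reach
\begin{equation*}
\EE'(t)\leq\frac{p-2}{t}\,\EE(t)-c_0\,t^{p-1}\big(F(x(t))-F^*\big),\qquad c_0:=(\f-1)\gamma_1-2.
\end{equation*}
The friction hypothesis is exactly what makes $c_0>0$ (indeed $c_0\geq\frac{2(\gamma_1+2)}{\gamma_1-2}$), so a genuine dissipation term is present.

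The heart of the matter is to show this dissipation beats the amplification $\frac{p-2}{t}\EE$. Here the sharpness condition $\hd(\gamma_2)$ is decisive: it gives $\norm{x-x^*}^2\leq\big(K^{-1}(F-F^*)\big)^{2/\gamma_2}$, so the spatial content of $\EE$ is controlled by the \emph{sublinear} power $(F-F^*)^{2/\gamma_2}$. A time-weighted Young inequality with conjugate exponents $\big(\frac{\gamma_2}{2},\frac{\gamma_2}{\gamma_2-2}\big)$ then turns the offending term $t^{p-3}\norm{x-x^*}^2$ into $\varepsilon\,t^{p-1}(F-F^*)+C_\varepsilon\,t^{-1}$, the first piece being absorbed into $c_0t^{p-1}(F-F^*)$ for small $\varepsilon$; the compatibility $\gamma_1\leq\gamma_2$ (which Lemma~\ref{lem:geometry2} guarantees anyway) is what keeps these exponents consistent. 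The hard part will be the kinetic content $t^{p-1}\norm{\dot x}^2$ hidden in $\EE$, which $F-F^*$ does not control directly and which lands at the same borderline order $t^{-1}$. To absorb it I would refine $\EE$ by taking the coefficient of $(x-x^*)$ strictly below $\f-1$, thereby generating a negative $\norm{\dot x}^2$ term, and by adding a multiple of $t^{p-2}\norm{x-x^*}^2$ whose derivative simultaneously supplies dissipation and cancels the cross term $\ps{x-x^*}{\dot x}$.

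Because every competing quantity lives at the same scale, the remainders produced above are exactly of order $t^{-1}$, which would integrate to a spurious $\log t$ and only yield $O(t^{-p}\log t)$. To reach the exact order I would finish with a bootstrap: the crude inequality $\EE'\leq\frac{p-2}{t}\EE$ first gives $\EE(t)=O(t^{p-2})$, i.e. $F(x(t))-F^*=O(t^{-2})$; feeding a known rate $F-F^*=O(t^{-\mu})$ back through $\hd(\gamma_2)$ improves the Young remainder and returns the rate $\mu\mapsto 2+\frac{2}{\gamma_2}\mu$, a contraction whose fixed point is exactly $p$. Iterating drives $\mu$ up to $p$, and a final treatment of the borderline case removes the residual logarithm, giving $F(x(t))-F^*=O\big(t^{-2\gamma_2/(\gamma_2-2)}\big)$. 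Throughout, the argument closes only because the two geometric hypotheses act on different quantities — $\hu(\gamma_1)$ fixing the friction margin $c_0$ and $\hd(\gamma_2)$ converting spatial decay into value decay — together with the sharp friction threshold $\f\geq\frac{\gamma_1+2}{\gamma_1-2}$.
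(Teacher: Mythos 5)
Your setup is sound up to the differential inequality: with $\lambda=\f-1$ (so $\xi=0$) your energy is $t^{p-2}\EE_{\text{paper}}(t)$ and Lemma~\ref{LemmeFonda} does give $\EE'(t)\leq\frac{p-2}{t}\EE(t)-c_0t^{p-1}(F(x(t))-F^*)$ with $c_0=(\f-1)\gamma_1-2>0$. But the three steps you defer are precisely the hard ones, and at least two of them do not close as described. (i) The amplification $\frac{p-2}{t}\EE(t)$ contains $t^{p-3}\norm{(\f-1)(x(t)-x^*)+t\dot x(t)}^2$, whose kinetic part is not controlled by $F-F^*$; your proposed repair (lower the coefficient of $x-x^*$ below $\f-1$ and add a multiple of $t^{p-2}\norm{x-x^*}^2$) is exactly the move that, once executed, forces $\xi<0$ and destroys the dissipation term you rely on: with the choice that kills the $b$-coefficient the $a$-coefficient becomes $0$, not negative, so the ``dissipation beats amplification'' architecture collapses. (ii) Your use of $\hd(\gamma_2)$ in the form $\norm{x-x^*}^2\leq(K^{-1}(F-F^*))^{2/\gamma_2}$ for the \emph{fixed} limit point $x^*$ is invalid when $X^*$ is not a singleton: the growth condition controls $d(x,X^*)$, not the distance to a prescribed minimizer, and the theorem is stated without uniqueness. (iii) The bootstrap $\mu\mapsto 2+\frac{2}{\gamma_2}\mu$ has fixed point $p$ but never reaches it in finitely many steps; you only obtain $O(t^{-\mu})$ for every $\mu<p$, and ``a final treatment of the borderline case removes the residual logarithm'' is an assertion, not an argument. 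As written the proof does not establish the endpoint rate.

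For comparison, the paper sidesteps all three issues by a different choice of parameters and a non-Gr\"onwall endgame. It takes $\lambda=\frac{2}{\gamma_1-2}$ and $p=\frac{4}{\gamma_1-2}$ (exponents built from $\gamma_1$, not $\gamma_2$), for which the coefficients of both $a(t)$ and $c(t)$ in Lemma~\ref{LemmeFonda} vanish and the coefficient of $b(t)$ equals $2(\frac{\gamma_1+2}{\gamma_1-2}-\f)\leq 0$; hence $\HH$ is non-increasing, and this is done \emph{uniformly over all} $x^*\in X^*$ (coercivity makes $X^*$ compact). Boundedness of $\HH$ then yields $t^{\frac{2\gamma_1}{\gamma_1-2}}(F(x(t))-F^*)\leq\frac{|\xi|}{2}t^{\frac{4}{\gamma_1-2}}d(x(t),X^*)^2+A$, and the rate is extracted purely algebraically: setting $v(t)=t^{\frac{4}{\gamma_2-2}}d(x(t),X^*)^2$ and invoking $\hd(\gamma_2)$ gives $Kv(t)^{\gamma_2/2}\leq\frac{|\xi|}{2}v(t)+At^{\frac{4}{\gamma_2-2}-\frac{4}{\gamma_1-2}}$, whose superlinear left-hand side ($\gamma_2>2$, $\gamma_1\leq\gamma_2$) forces $v$ bounded and hence the exact exponent $\frac{2\gamma_2}{\gamma_2-2}$ with no logarithm and no iteration. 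If you want to salvage your route, the essential corrections are: adopt the $\gamma_1$-based $(\lambda,p)$ so the kinetic term carries a nonpositive sign, work with $d(x,X^*)$ and a uniform-in-$x^*$ bound rather than a single limit point, and replace the Young/bootstrap step by the superlinear inequality in $v$.
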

In the case when $\gamma_1= \gamma_2$, we have furthermore the convergence of the trajectory:
\begin{corollary}\label{Corol2}
Let $\gamma>2$. If $F$ is coercive and satisfies $\hu(\gamma)$ and $\hd(\gamma)$, and if 
$\f\geqslant  \frac{\gamma+2}{\gamma-2}$ then we have:
\begin{equation*}\label{eqCorol2}
F(x(t))-F^*=O\left(\frac{1}{t^{\frac{2\gamma}{\gamma-2}}}\right),
\end{equation*} 
and
\begin{equation}
\norm{\dot x(t)}=O\left(\frac{1}{t^{\frac{\gamma}{\gamma-2}}}\right).
\end{equation}
Moreover the trajectory $x(t)$ has a finite length and it converges to a minimizer $x^*$ of $F$.
\end{corollary}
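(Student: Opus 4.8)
The plan is to build a Lyapunov energy functional and use the hypotheses $\hu(\gamma)$ and $\hd(\gamma)$ to force its decay, deriving both the convergence rate and the integrability needed for trajectory convergence. Since we are in the regime $\gamma>2$ with $\f\geqslant\frac{\gamma+2}{\gamma-2}$, I expect the driving rate to be $p:=\frac{2\gamma}{\gamma-2}$, so I would look for an energy of the form
\begin{equation*}
\EE(t)=t^{p}\bigl(F(x(t))-F^*\bigr)+\tfrac12\norm{\lambda(t)(x(t)-x^*)+t\,\dot x(t)}^2+\tfrac{\xi}{2}\norm{x(t)-x^*}^2,
\end{equation*}
with a carefully tuned scalar gauge $\lambda(t)$ (typically $\lambda(t)\sim\frac{p}{2}$ or a multiple of $t$-free constant times $t$), chosen precisely so that the time derivative $\dot{\EE}(t)$ has no positive contribution. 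First I would differentiate $\EE$ along \eqref{ODE}, substituting $\ddot x=-\frac{\f}{t}\dot x-\nabla F(x)$ to eliminate second derivatives; the cross terms will produce a factor $\langle\nabla F(x),x-x^*\rangle$, which is exactly what $\hu(\gamma)$ controls from above by $\gamma\,(F(x)-F^*)$. The flatness condition $\hu(\gamma)$ is what lets the energy stay bounded, and I expect the algebra to close only when $\f\geqslant\frac{\gamma+2}{\gamma-2}$, i.e. this is the threshold making the coefficient of the leftover $t^{p-1}(F-F^*)$ term nonpositive.

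Once $\EE$ is shown to be nonincreasing (or at least bounded), boundedness of $\EE(t)$ immediately gives $F(x(t))-F^*=O(t^{-p})$, which is the first claimed rate. For the velocity estimate I would read off from the bounded energy that $\norm{t\dot x(t)+\lambda(t)(x-x^*)}=O(1)$ combined with the growth condition $\hd(\gamma)$: since $F(x)-F^*=O(t^{-p})$ and $\hd(\gamma)$ gives $K\norm{x-x^*}^\gamma\leqslant F(x)-F^*$, we get $\norm{x(t)-x^*}=O(t^{-p/\gamma})=O(t^{-\frac{2}{\gamma-2}})$, hence $\lambda(t)\norm{x-x^*}=O(t^{-\frac{2}{\gamma-2}})$, and the triangle inequality yields $\norm{\dot x(t)}=O(t^{-1-\frac{2}{\gamma-2}})=O(t^{-\frac{\gamma}{\gamma-2}})$, matching the stated bound. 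Here the coercivity hypothesis ensures the trajectory stays in a neighborhood of $X^*$ so that the local hypotheses $\hu,\hd$ apply globally along the trajectory.

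For the finite-length and convergence conclusions I would integrate the velocity bound: since $\frac{\gamma}{\gamma-2}>1$ for every $\gamma>2$, the integral $\int_{t_0}^{\infty}\norm{\dot x(t)}\,dt$ converges, so $t\mapsto x(t)$ has finite length and is therefore a Cauchy path, forcing convergence to some point $x_\infty$. Finiteness of length gives convergence directly, and since $F(x(t))\to F^*$ by continuity $x_\infty\in X^*$. The main obstacle I anticipate is the sharp tuning of the energy: getting a genuinely nonincreasing $\EE$ rather than merely bounded requires the precise interplay of the exponent $p=\frac{2\gamma}{\gamma-2}$, the gauge $\lambda(t)$, and the friction threshold $\f\geqslant\frac{\gamma+2}{\gamma-2}$, and I would expect to need the extra quadratic term $\frac{\xi}{2}\norm{x-x^*}^2$ (absorbing the slack from $\hd(\gamma)$) to kill a residual sign-indefinite term. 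Verifying that all leftover coefficients are simultaneously nonpositive under the single assumption $\f\geqslant\frac{\gamma+2}{\gamma-2}$ — rather than requiring a stronger friction bound — is the delicate computational heart of the argument, and is likely where the condition $\gamma_1=\gamma_2$ is genuinely used (so that the flatness exponent controlling the energy and the sharpness exponent controlling the velocity coincide).
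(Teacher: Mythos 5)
Your overall architecture (weighted Lyapunov energy closed by $\hu(\gamma)$, distance control by $\hd(\gamma)$, then integration of the velocity) is the same as the paper's, but two steps as written do not go through. First, the claim that boundedness of the energy ``immediately gives'' $F(x(t))-F^*=O(t^{-\frac{2\gamma}{\gamma-2}})$ is false in this regime: with the exponents forced by the algebra ($\lambda=\frac{2}{\gamma-2}$, total weight $t^{\frac{2\gamma}{\gamma-2}}$ on $F-F^*$), one necessarily has $\xi=\lambda(\lambda+1-\f)<0$ when $\f\geqslant\frac{\gamma+2}{\gamma-2}$, so the energy is \emph{not} a sum of nonnegative terms and its boundedness only yields $t^{\frac{2\gamma}{\gamma-2}}(F(x(t))-F^*)\leqslant \frac{|\xi|}{2}t^{\frac{4}{\gamma-2}}d(x(t),X^*)^2+A$. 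The paper closes this loop with a self-bounding argument: setting $v(t)=t^{\frac{4}{\gamma-2}}d(x(t),X^*)^2$, the growth condition $\hd(\gamma)$ turns the previous display into $Kv(t)^{\gamma/2}\leqslant\frac{|\xi|}{2}v(t)+A$, and since $\gamma/2>1$ this forces $v$ to be bounded, which then gives the rate. This bootstrapping step --- the actual role of $\hd(\gamma)$ in obtaining the rate on the values, not just the velocity bound --- is missing from your proposal.

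Second, your velocity estimate does not follow from the energy you wrote down. You place the weight $t^{p}$ only on $F-F^*$, so boundedness gives $\norm{\lambda(x(t)-x^*)+t\dot x(t)}=O(1)$, and the triangle inequality then yields only $\norm{\dot x(t)}=O(1/t)$; your stated conclusion $O(t^{-1-\frac{2}{\gamma-2}})$ does not follow from dividing $O(1)+O(t^{-\frac{2}{\gamma-2}})$ by $t$. To get $\norm{\dot x(t)}=O(t^{-\frac{\gamma}{\gamma-2}})$ one needs the kinetic term itself weighted by $t^{\frac{4}{\gamma-2}}$, as in the paper's $\HH(t)=t^{\frac{4}{\gamma-2}}\EE(t)$, which (together with the boundedness of $v$) gives $\norm{\lambda(x(t)-\tilde x)+t\dot x(t)}=O(t^{-\frac{2}{\gamma-2}})$. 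A further point the paper is careful about and you elide: since the minimizer need not be unique here, $\hd(\gamma)$ controls $d(x,X^*)$ and not $\norm{x-x^*}$ for a fixed $x^*$; the paper therefore works with the whole family of energies indexed by $x^*\in X^*$, shows they are uniformly bounded (using coercivity, hence compactness of $X^*$), and evaluates at the projection $\tilde x$ of $x(t)$ onto $X^*$ at each time. Your final step (finite length from integrability of $t^{-\frac{\gamma}{\gamma-2}}$, hence convergence to a point which must lie in $X^*$) is correct.
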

\begin{figure}[h]
\includegraphics[width=\textwidth]{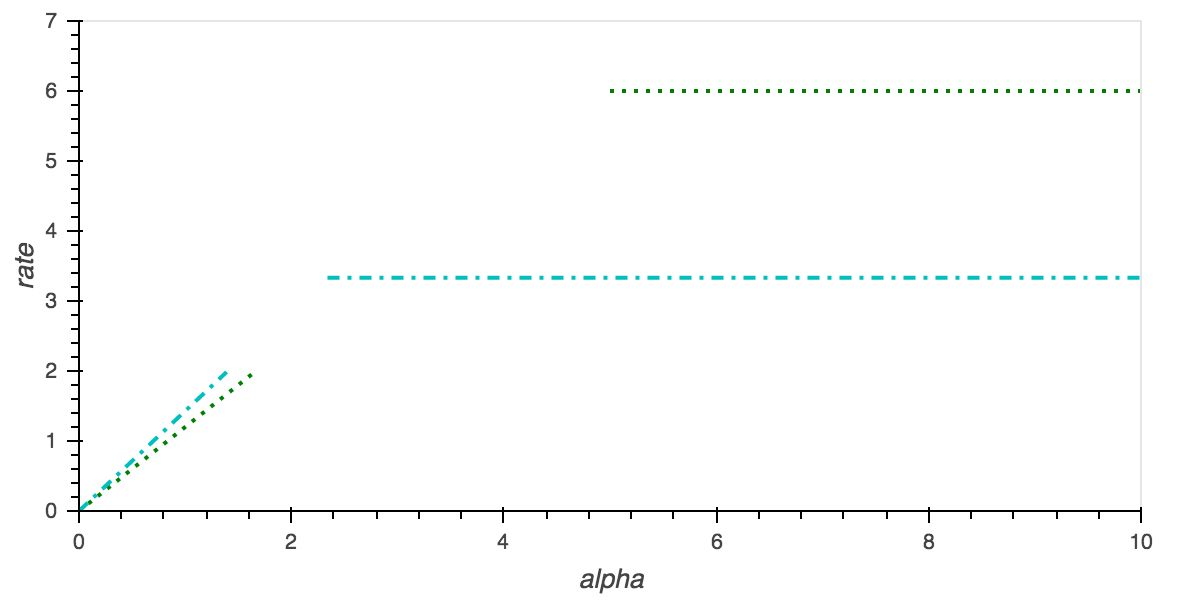}
\caption{Decay rate $r(\alpha,\gamma)=\frac{2\gamma}{\gamma-2}$ depending on the value of $\alpha$ when $\alpha\geqslant \frac{\gamma+2}{\gamma-2}$ 
when $F$ satisfies $\hu(\gamma)$ (as in Theorem \ref{Theo2}) for two values $\gamma$: $\gamma_3=3$ dotted line and $\gamma_4=5$ dashed-dotted line.}\label{fig:flat1}
\end{figure}
\begin{figure}[h]
\includegraphics[width=\textwidth]{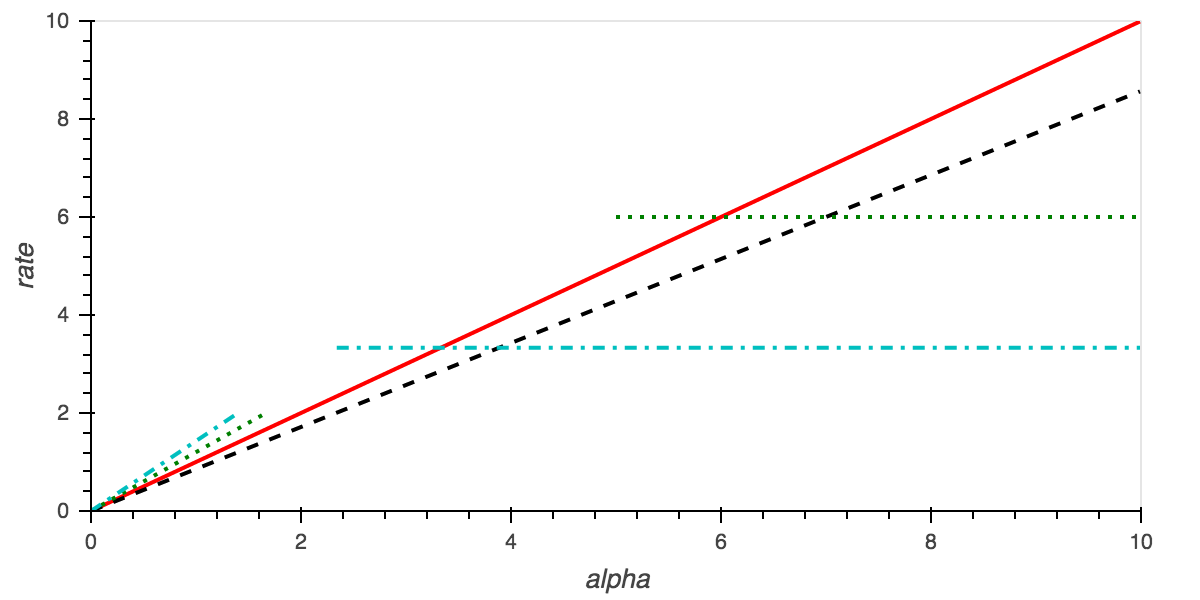}
\caption{Decay rate $r(\alpha,\gamma)$ depending on the value of $\alpha$ if $F$  satisfies $\hu(\gamma)$ and $\hd(r)$ with $r=\max(2,\gamma)$ for four values $\gamma$ : 
$\gamma_1=1.5$ dashed line, $\gamma_2=2$,  solid line, $\gamma_3=3$ dotted line and $\gamma_4=5$ dashed-dotted line.}\label{fig:flat2}
\end{figure}
Observe that the decay obtained in Corollary \ref{Corol2} is optimal since Attouch et al. proved that it is achieved for the function $F(x)=\vert x\vert^\gamma$ in \cite{attouch2018fast}.\\

From Theorems \ref{Theo1}, \ref{Theo1b} and \ref{Theo2}, we can make the following comments: first in Theorems~\ref{Theo1b} and \ref{Theo2}, both conditions $\hu$ and $\hd$ are used to get a decay rate and it turns out that these two conditions are important. 

With the sole hypothesis $\hd(\gamma)$ it seems difficult to establish optimal rate. Consider for instance the function $F(x)=|x|^3$ which satisfies $\hu(3)$ and $\hd(3)$. Applying Theorem \ref{Theo2} with $\gamma_1=\gamma_2=3$, we know that for this function with $\alpha=\frac{\gamma_1+2}{\gamma_1-2}=5$, we have $F(x(t))-F^*=O\left(\frac{1}{t^6}\right)$. But, with the sole hypothesis $\hd(3)$, such a decay cannot be achieved. Indeed,
the function $F(x)=|x|^{2}$ satisfies $\hd(3)$, but from the optimality part of Theorem \ref{Theo1b} we know that we cannot achieve a decay better than $\frac{1}{t^{\frac{2\alpha \gamma}{\gamma+2}}}=\frac{1}{t^5}$ for $\f=5$. 

Consider now a convex function $F$ behaving like $\norm{x-x^*}^{\gamma}$ in the neighborhood of its unique minimizer $x^*$. The decay of $F(x(t))-F^*$ then depends directly on $\alpha$ if $\gamma\leqslant 2$, but it does not depend on $\alpha$ for large $\alpha$ if $\gamma>2$. Moreover for such functions the best decay rate of $F(x(t))-F^*$ is $O\left(\frac{1}{t^{\alpha}}\right)$ and is achieved for $\gamma=2$ i.e. for quadratic like functions around the minimizer. If $\gamma<2$, it seems that the oscillations of the solution $x(t)$ prevent us from getting an optimal decay rate. 
The inertia seems to be too large for such functions. If $\gamma>2$, for large $\alpha$, the decay is not as fast because the gradient of the functions decays too fast in the neighborhood of the minimizer. For these functions a larger inertia could be more efficient.

Finally, observe that as shown in Figures \ref{fig:flat1} and \ref{fig:flat2}, the case when $1+\frac{2}{\gamma}<\alpha<\frac{\gamma+2}{\gamma-2}$ is not covered by our results. Although we did not get a better convergence rate than $\frac{1}{t^2}$ in that case, we can prove that there exist some initial conditions for which the convergence rate can not be better than $t^{-\frac{2\gamma\alpha}{\gamma+2}}$:
\begin{proposition}\label{PropOpt2}
Let $\gamma >2$ and $1+\frac{2}{\gamma}<\alpha<\frac{\gamma+2}{\gamma-2}$. Let $x$ be a solution of \eqref{EDO} with $F(x)=|x|^\gamma$, $|x(t_0)|<1$ and $\dot x(t_0)=0$ for any given $t_0>0$. Then there exists $K>0$ such that for any $T>0$, there exists $t\geqslant T$ such that:
$$F(x(t))-F^* \geqslant \dfrac{K}{t^{\frac{2\gamma\alpha}{\gamma+2}}}.$$\label{prop:gap}
\end{proposition}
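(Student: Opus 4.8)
The plan is to reduce to the scalar dynamics and show that, for $F(x)=|x|^\gamma$ with $F^*=0$, the value $F(x(t))=|x(t)|^\gamma$ cannot beat the envelope rate $t^{-\frac{2\gamma\alpha}{\gamma+2}}$: the only way to decay faster would be a monotone approach to $0$ at the self-similar rate $t^{-\frac{2}{\gamma-2}}$, and this regime is excluded precisely by the standing assumption $\alpha<\frac{\gamma+2}{\gamma-2}$ together with $\dot x(t_0)=0$, $0<|x(t_0)|<1$. First I would record the elementary energy estimate: setting $g(t)=\tfrac12\dot x(t)^2+|x(t)|^\gamma$, differentiation along the ODE gives $\dot g(t)=\langle\dot x,\ddot x+\nabla F(x)\rangle=-\frac{\alpha}{t}\dot x(t)^2\le 0$. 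Since $\dot x(t_0)=0$ and $|x(t_0)|<1$, this yields $g(t)\le |x(t_0)|^\gamma<1$, hence $|x(t)|<1$, $|\dot x(t)|<\sqrt2$ for all $t\ge t_0$, and $\int_{t_0}^\infty\frac{\dot x(s)^2}{s}\,ds<\infty$. These a priori bounds confine the trajectory and justify all subsequent manipulations.

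The heart of the argument is a tuned, manifestly nonnegative Lyapunov functional producing a \emph{lower} envelope. I would introduce the exponent $p:=\frac{2\alpha}{\gamma+2}$ (so that $\gamma p=\frac{2\gamma\alpha}{\gamma+2}=:\mu$ is the conjectured value rate) and consider a completed-square functional of the form
\[
\mathcal E(t)=\tfrac12\bigl(\lambda\, t^{p}x(t)+t^{q}\dot x(t)\bigr)^2+\kappa\, t^{\gamma p}\,|x(t)|^\gamma ,
\]
with $q=p+1$ and constants $\lambda,\kappa>0$ to be fixed. The key point, opposite to the usual upper-bound Lyapunov analysis, is to choose $\lambda,\kappa$ so that $\mathcal E$ is \emph{nondecreasing} for $t\ge t_0$; since $\dot x(t_0)=0$ and $x(t_0)\neq0$, one then has $\mathcal E(t)\ge \mathcal E(t_0)>0$ for all $t\ge t_0$. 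Writing $w(t):=t^{p}|x(t)|$, the bound reads $\tfrac12\lambda^2 w^2+\kappa\, w^\gamma\ge \mathcal E(t_0)$ whenever the velocity term $\lambda t^p x+t^q\dot x$ vanishes, and in particular $\mathcal E$ bounded below forces $w(t)\not\to0$.

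From $\mathcal E\ge\mathcal E(t_0)>0$ the conclusion follows by a short case analysis on the asymptotic monotonicity of $x$. If $x$ were eventually monotone and decaying at the self-similar rate $t^{-2/(\gamma-2)}$ (the only power law consistent with the momentum identity $\frac{d}{dt}(t^\alpha\dot x)=-\gamma\,t^\alpha x|x|^{\gamma-2}$), then $w(t)=t^{p}|x(t)|=O\bigl(t^{\,p-2/(\gamma-2)}\bigr)\to0$ because $p<\frac{2}{\gamma-2}\iff\alpha<\frac{\gamma+2}{\gamma-2}$, whence $\mathcal E(t)\to0$, a contradiction. Thus $x$ either oscillates, in which case $\dot x$ vanishes at a sequence of local extrema $t_k\to\infty$ where $g(t_k)=|x(t_k)|^\gamma$ and $\mathcal E(t_k)=\tfrac12\lambda^2 t_k^{2p}x(t_k)^2+\kappa t_k^{\mu}|x(t_k)|^\gamma\ge\mathcal E(t_0)$ forces $w(t_k)\ge w_0>0$ and hence $F(x(t_k))-F^*\ge w_0^\gamma\,t_k^{-\mu}$; or $x$ is monotone but does not decay faster than $t^{-p}$, in which case $\liminf t^{p}|x(t)|>0$ gives $F(x(t))-F^*\ge K\,t^{-\mu}$ directly for all large $t$. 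In every case the claimed $\limsup$ lower bound holds with $\mu=\frac{2\gamma\alpha}{\gamma+2}$.

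The main obstacle I anticipate is establishing the reverse monotonicity $\dot{\mathcal E}\ge0$: computing $\dot{\mathcal E}$ and showing the resulting quadratic expression in $(x,\dot x)$ is sign-definite for $t\ge t_0$ is delicate and is exactly where all three constraints $\gamma>2$, $\alpha>1+\frac{2}{\gamma}$ and $\alpha<\frac{\gamma+2}{\gamma-2}$ must be used simultaneously. This is genuinely harder than in Proposition~\ref{prop_optimal} (the case $\gamma\le2$): there the restoring force $\gamma|x|^{\gamma-2}$ blows up near $0$ and persistent oscillation is automatic, whereas for $\gamma>2$ it degenerates as $x\to0$, so the possibility of a fast, monotone self-similar decay is real and must be ruled out analytically. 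The upper bound $\alpha<\frac{\gamma+2}{\gamma-2}$ is precisely the threshold at which the envelope rate $p$ meets the self-similar rate $\frac{2}{\gamma-2}$, and pinning the constants $\lambda,\kappa$ and the admissible range of $t_0$ so that this separation propagates to a sign-definite $\dot{\mathcal E}$ is the step requiring the most care.
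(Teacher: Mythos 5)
Your overall strategy --- a Lyapunov-type functional that is \emph{nondecreasing} along the trajectory, strictly positive at $t_0$ because $\dot x(t_0)=0$ and $x(t_0)\neq 0$, and an extraction of times at which its value concentrates on the $F-F^*$ term --- is exactly the mechanism of the paper's proof. But the crux, $\dot{\mathcal E}\geqslant 0$, is precisely the step you defer, and for the functional you propose it fails. Write $p=\frac{2\alpha}{\gamma+2}$, $\mu=\gamma p$, and differentiate $\mathcal E(t)=\frac12\bigl(\lambda t^{p}x+t^{p+1}\dot x\bigr)^2+\kappa t^{\mu}|x|^{\gamma}$ along $\ddot x=-\frac{\alpha}{t}\dot x-\gamma x|x|^{\gamma-2}$. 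The product of the bracket with $-\gamma t^{p+1}x|x|^{\gamma-2}$ yields $-\lambda\gamma t^{2p+1}|x|^{\gamma}-\gamma t^{2p+2}x|x|^{\gamma-2}\dot x$, while the potential part contributes $+\kappa\mu t^{\mu-1}|x|^{\gamma}+\kappa\gamma t^{\mu}x|x|^{\gamma-2}\dot x$. The sign-indefinite terms in $x|x|^{\gamma-2}\dot x$ cancel only if $\mu=2p+2$, i.e. $p=\frac{2}{\gamma-2}$, i.e. $\alpha=\frac{\gamma+2}{\gamma-2}$ --- exactly the excluded endpoint; for $\alpha<\frac{\gamma+2}{\gamma-2}$ the term $-\gamma t^{2p+2}x|x|^{\gamma-2}\dot x$ survives with the top power of $t$. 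Moreover $2p+1>\mu-1$ in this regime, so the negative term $-\lambda\gamma t^{2p+1}|x|^\gamma$ dominates the positive $\kappa\mu t^{\mu-1}|x|^\gamma$ for large $t$ and cannot be absorbed by $\lambda^2p\,t^{2p-1}x^2$ on the expected scale $|x|\sim t^{-p}$. No choice of $\lambda,\kappa>0$ makes this sign-definite: the relative $t$-weight between your squared-momentum term and your potential term is $t^{(\gamma-2)p}$, whereas cancellation of the $\ps{\nabla F(x)}{\dot x}$-type terms forces the relative weight $t^{2}$.

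This is exactly why the paper keeps the weight $t^2$ and adds a \emph{negative} quadratic correction: it uses $\mathcal H(t)=t^{q}\bigl[t^{2}(F(x(t))-F^*)+\frac12\norm{\lambda(x(t)-x^*)+t\dot x(t)}^2+\frac{\xi}{2}\norm{x(t)-x^*}^2\bigr]$ with $q=\frac{2\gamma\alpha}{\gamma+2}-2$, $\lambda=\frac{2\alpha}{\gamma+2}$ and $\xi=\lambda(\lambda+1-\alpha)<0$, for which Lemma~\ref{LemmeFonda} is an equality when $F=|\cdot|^\gamma$ and gives $\mathcal H'(t)=K_1t^{q}c(t)\geqslant 0$, with $K_1>0$ precisely because $1+\frac{2}{\gamma}<\alpha<\frac{\gamma+2}{\gamma-2}$; positivity of $\mathcal H(t_0)$ then follows from $\lambda^2>|\xi|$, which is again equivalent to $\alpha<\frac{\gamma+2}{\gamma-2}$. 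A secondary gap: even granting $\mathcal E(t)\geqslant\mathcal E(t_0)>0$, your trichotomy is not exhaustive --- excluding the single power law $t^{-2/(\gamma-2)}$ does not rule out a monotone trajectory with $t^{p}|x(t)|\to 0$ at some other rate, and ``monotone but not decaying faster than $t^{-p}$'' gives a $\limsup$, not the $\liminf$ you invoke. The paper's endgame avoids classifying the dynamics of $x$ altogether: it splits on whether $y(t)=t^{\lambda}x(t)$ is bounded and, in the bounded case, uses Lemma~\ref{lemmatech} to produce times $t_1$ with $|\dot y(t_1)|\leqslant\epsilon/t_1$, at which $b(t_1)$ is negligible and the lower bound on $\mathcal H$ transfers to $a(t_1)$, as in Proposition~\ref{prop_optimal}.
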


\paragraph{Numerical Experiments}
In the following numerical experiments, the optimality of the decays given in all previous theorems, are tested for various choices of $\alpha$ and $\gamma$. 

More precisely we use a discrete Nesterov scheme to approximate the solution of \eqref{ODE} for $F(x)=|x|^{\gamma}$ on the interval $[t_0,T]$ with $t_0=0$ and $\dot{x}(t_0)=0$, see \cite{su2016differential}.

If $\gamma\geqslant 2$, $\nabla F$ is a Lipschitz function and we define the sequence $(x_n)_{n\in\mathbb{N}}$ as follows:
\begin{equation*} 
x_n=y_n-h\nabla F(y_n)\text{ with }y_n=x_n+\frac{n}{n+\alpha}(x_n-x_{n-1}),
\end{equation*}
where $h\in(0,1)$ is a time step. 

If $\gamma<2$, we use a proximal step : 
\begin{equation*} 
x_n=prox_{h F}(y_n)\text{ with }y_n=x_n+\frac{n}{n+\alpha}(x_n-x_{n-1}).
\end{equation*}
It has been shown that $x_n\approx x(n\sqrt{h})$ where the function $x$ is a solution of the ODE \eqref{ODE}.
 
In the following numerical experiments the sequence $(x_n)_{n\in\mathbb{N}}$ is computed for various pairs $(\gamma,\alpha)$. The step size is always set to $h=10^{-7}$. 

We define the function $rate(\alpha,\gamma)$ as the expected rate given in all the previous theorems and Proposition \ref{PropOpt2}, that is:
\begin{eqnarray*}
rate(\alpha,\gamma)&:=&\left\{\begin{array}{ll}
\dfrac{2\alpha \gamma}{\gamma+2} &\text{ if } \gamma\leqslant 2 \text{ or if }\gamma>2\text{ and }\alpha\leqslant 1+\frac{2}{\gamma}, \\
\dfrac{2\gamma}{\gamma-2}& \text{ if } \gamma>2\text{ and }\alpha\geqslant \frac{\gamma+2}{\gamma-2}, \\
\dfrac{2\alpha \gamma}{\gamma+2} &\text{ if } \gamma>2 \text{ and } \alpha\in(1+\frac{2}{\gamma}, \frac{\gamma+2}{\gamma-2}).
\end{array}\right.
\end{eqnarray*}

If the function $z(t):=\left(F(x(t))-F(x^*)\right)t^{\delta}$ is bounded but does not tend to 0, we can deduce that 
$\delta$ is the largest value such that 
$F(x(t))-F(x^*)=O\left(t^{-\delta}\right)$. 
We define
\begin{equation*}
z_n:=(F(x_n)-F(x^*))\times (n\sqrt{h})^{rate(\alpha,\gamma)}\approx (F(x(t))-F(x^*))t^{rate(\alpha,\gamma)},
\end{equation*} 
and if the function $rate(\alpha,\gamma)$ is optimal we expect that the sequence $(z_n)_{n\in\mathbb{N}}$ is bounded but do not decay to 0.  
The following figures give for various choices of $(\alpha,\gamma)$ the trajectory of the sequence $(z_n)_{n\in\mathbb{N}}$. The values are re-scaled such that the maximum is always $1$. In all these numerical examples, we will observe that the sequence $(z_n)_{n\in\mathbb{N}}$ is bounded and does not tend to $0$.

\begin{figure}[h]
\includegraphics[width=0.495\textwidth]{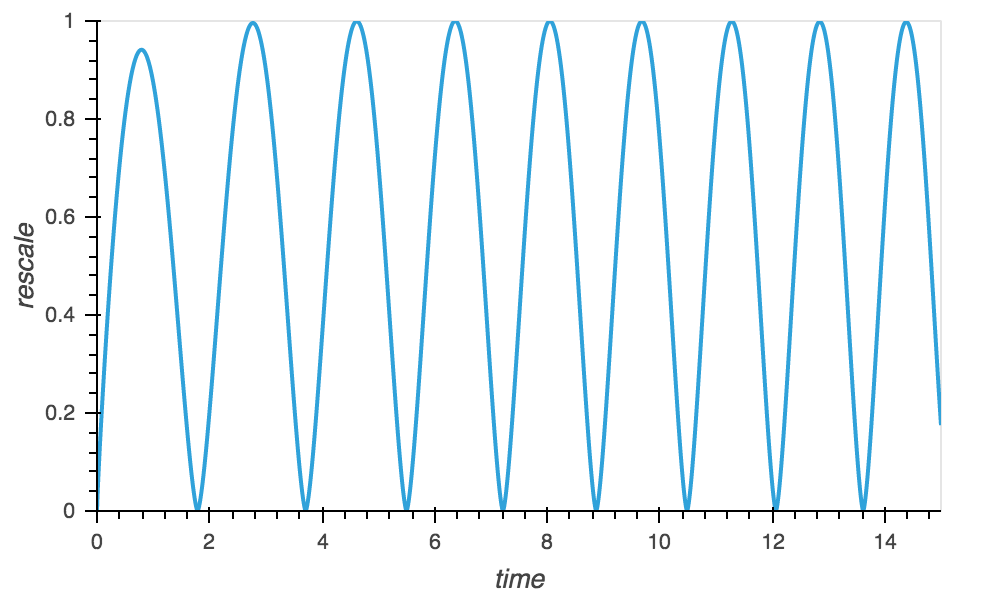}
\includegraphics[width=0.495\textwidth]{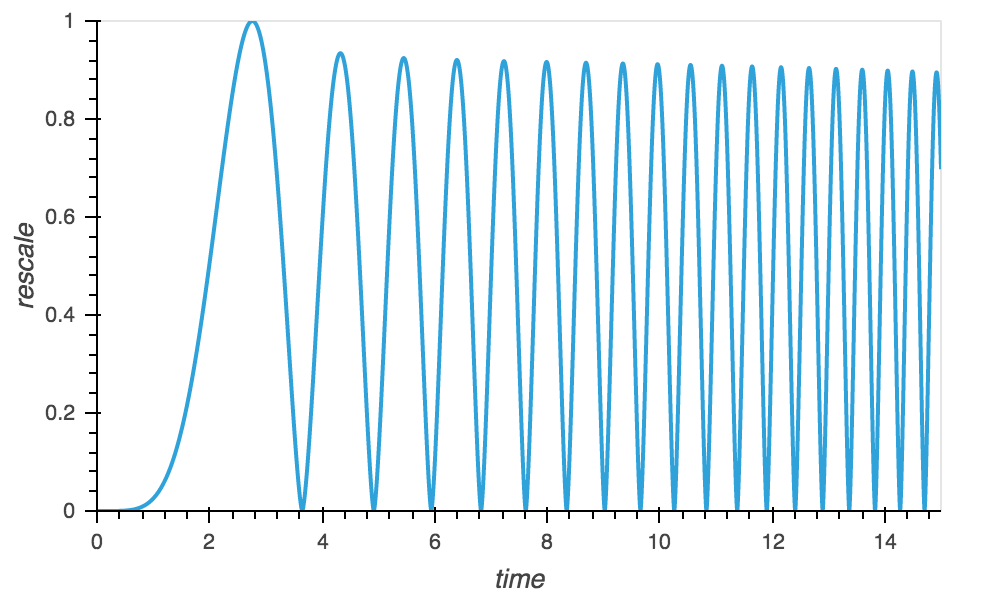}
\caption{Case when $\gamma=1.5$. On the left $\alpha=1$ and $rate(\alpha,\gamma)=\frac{2\alpha\gamma}{\gamma+2}=\frac{6}{7}$.
On the right $\alpha=6$ and $rate(\alpha,\gamma)=\frac{2\alpha\gamma}{\gamma+2}=\frac{36}{7}$}\label{fig:gamma1.5}
\end{figure}
\begin{figure}[h]
\includegraphics[width=0.495\textwidth]{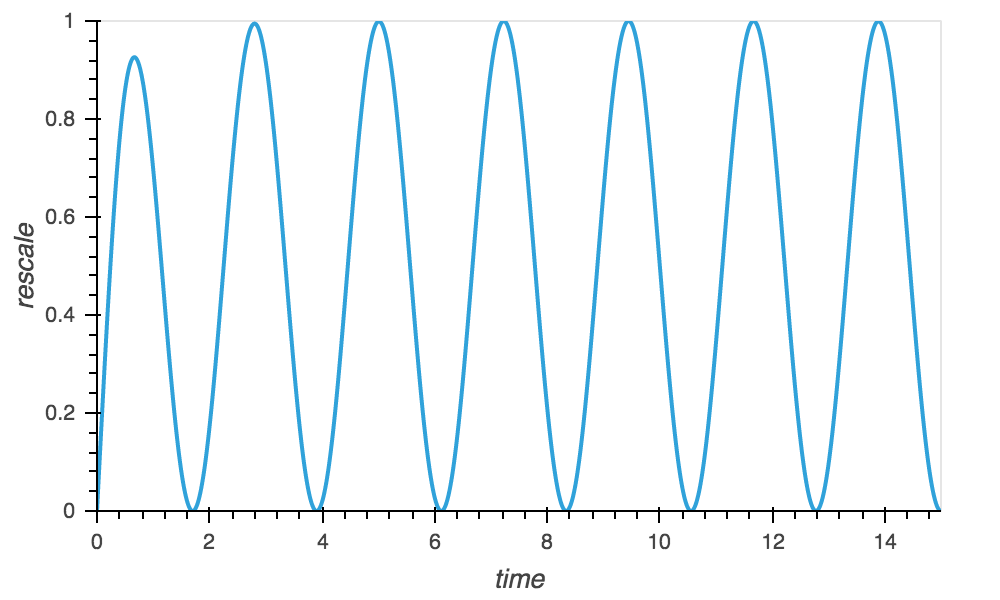}
\includegraphics[width=0.495\textwidth]{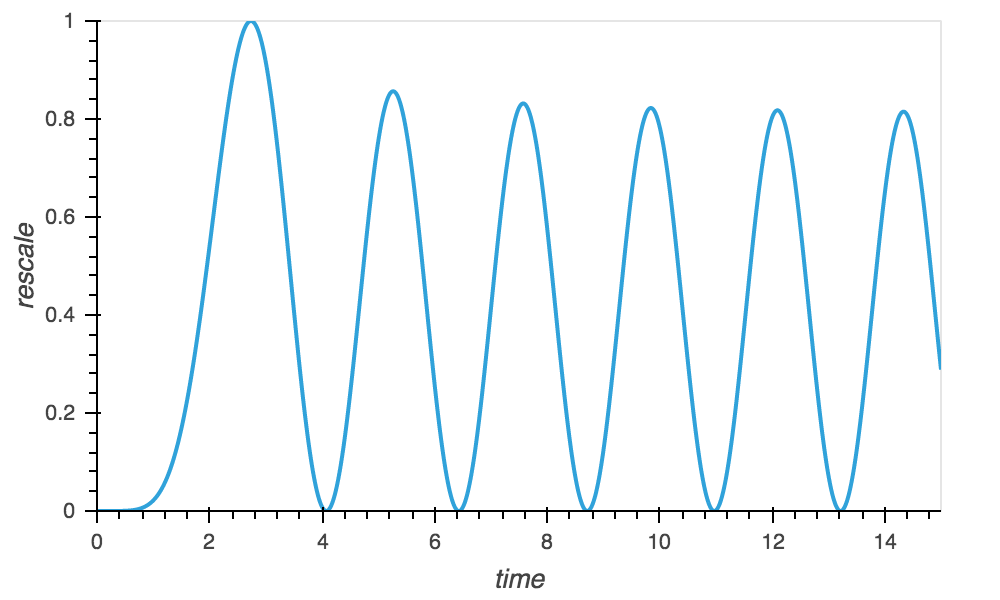}
\caption{Case when $\gamma=2$. On the left $\alpha=1$ and $rate(\alpha,\gamma)=\frac{2\alpha\gamma}{\gamma+2}=1$.
On the right $\alpha=6$ and $rate(\alpha,\gamma)=\frac{2\alpha\gamma}{\gamma+2}=6$}\label{fig:gamma2}
\end{figure}
\begin{figure}[h]
\includegraphics[width=0.495\textwidth]{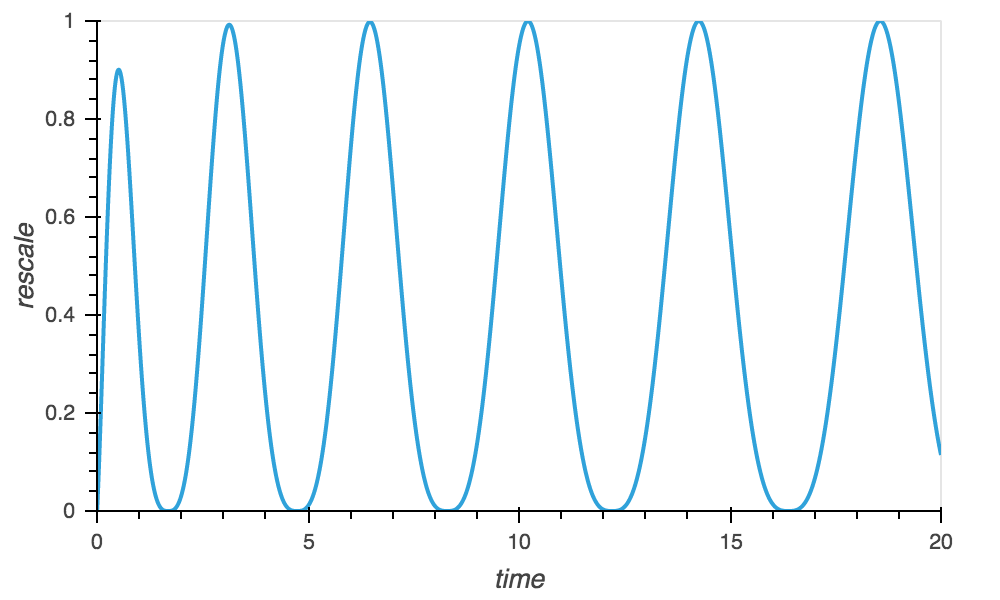}
\includegraphics[width=0.495\textwidth]{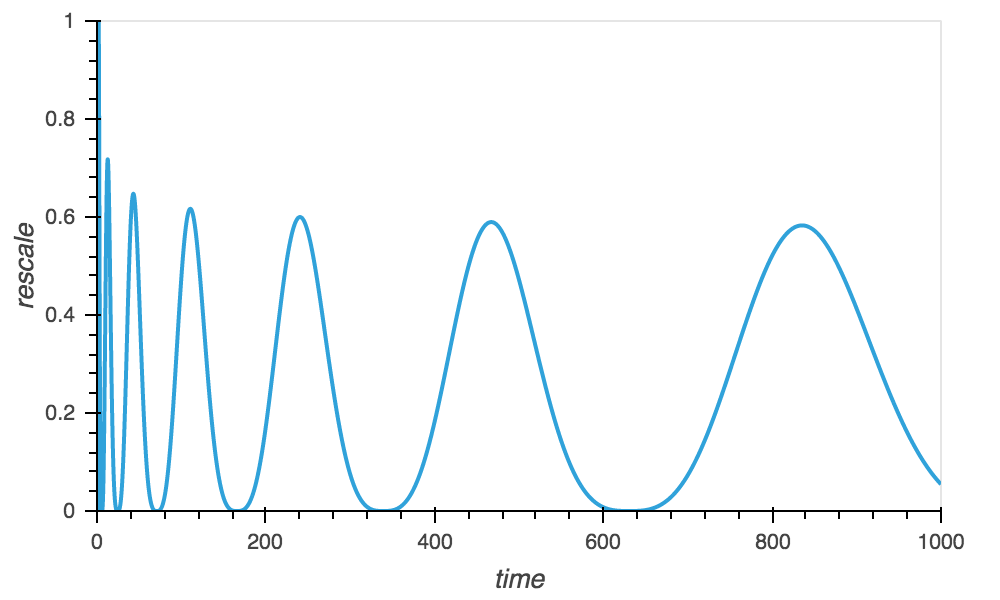}
\includegraphics[width=0.495\textwidth]{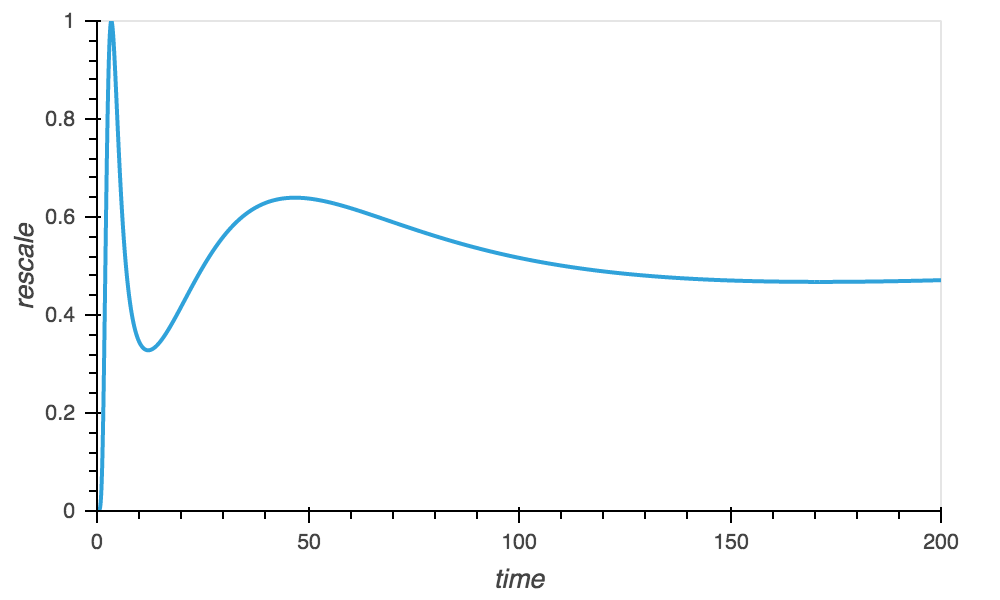}
\includegraphics[width=0.495\textwidth]{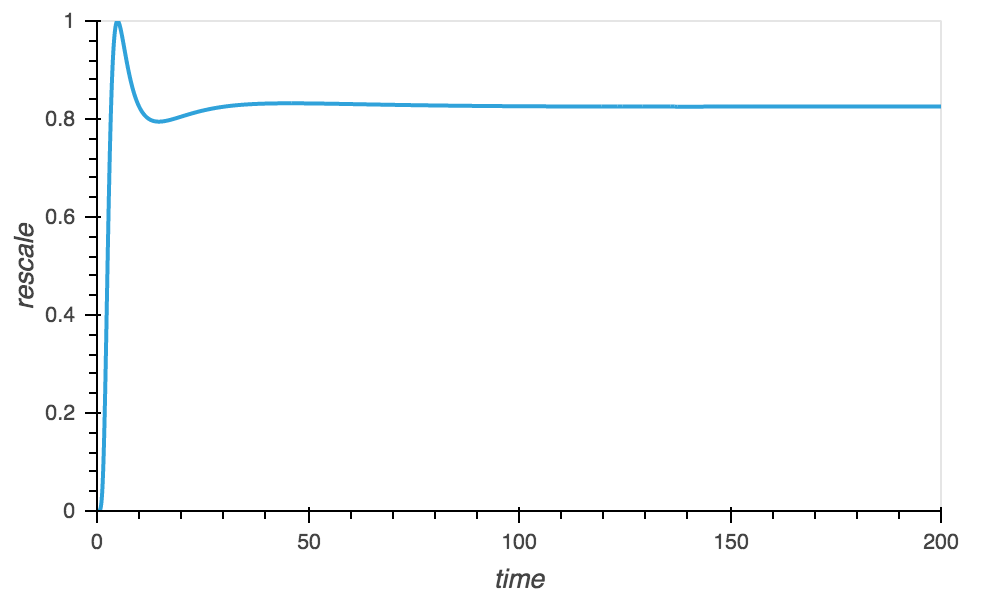}
\caption{Case when $\gamma=3$. On the top left $\alpha=1$ and $rate(\alpha,\gamma)=\frac{2\alpha\gamma}{\gamma+2}=1.2$,
on the top right $\alpha=4$ and $rate(\alpha,\gamma)=\frac{2\alpha\gamma}{\gamma+2}=4.8$,
on bottom left $\alpha=6$ and $rate(\alpha,\gamma)=\frac{2\gamma}{\gamma-2}=6$,
on bottom right $\alpha=8$ and $rate(\alpha,\gamma)=\frac{2\gamma}{\gamma-2}=6$}\label{fig:gamma3}
\end{figure}

\begin{itemize}
\item The Figures \ref{fig:gamma1.5} and \ref{fig:gamma2} with $\gamma=1.5$ and $\gamma=2$ illustrate Theorem \ref{Theo1}, Theorem \ref{Theo1b} and Proposition \ref{prop_optimal}. Indeed for sharp functions (i.e for $\gamma\leqslant 2$) the rate is proved to be optimal.
\item In the case $\gamma=3$ and $\alpha=1$, the fact that $(F(x(t))-F(x^*))t^{rate(\alpha,\gamma)}$ is bounded is also a consequence of Theorem \ref{Theo1}.  The optimality of this rate is not proven but the experiments show that it numerically is. 
\item In the case $\gamma=3$ and $\alpha=4$,  $\alpha\in(\frac{\gamma+2}{\gamma},\frac{\gamma+2}{\gamma-2})$ then 
the fact that $(F(x(t))-F(x^*))t^{rate(\alpha,\gamma)}$ is bounded is not proved but the experiments from Figure \ref{fig:gamma3} show that it numerically is. However Proposition \ref{PropOpt2} 
 proves that the sequence $(z_n)_{n\in\mathbb{N}}$ does not tend to 0, which is illustrated by the experiments.
\item When $\gamma=3$ and $\alpha=6$ or $\alpha=8$, Theorem \ref{Theo2} ensures that the sequence $(z_n)_{n\in\mathbb{N}}$ is bounded. This rate is proved to be optimal and the numerical experiments from Figure \ref{fig:gamma3} show that this rate is actually achieved for this specific choice of parameters.  
\end{itemize}

\section{Proofs}\label{sec_proofs}

In this section, we detail the proofs of the results presented in Section~\ref{sec_contrib}, namely Theorems \ref{Theo1}, \ref{Theo1b} and \ref{Theo2}, Propositions~\ref{prop_optimal} and \ref{prop:gap},  Corollary~\ref{Corol2}.

The proofs of the theorems rely on Lyapunov functions $\EE$ and $\HH$ introduced by Su, Boyd and Candes \cite{su2016differential}, Attouch, Chbani, Peypouquet and Redont \cite{attouch2018fast} and Aujol-Dossal \cite{AujolDossal} :  
\begin{equation*}
\mathcal{E}(t)=t^2(F(x(t))-F^*)+\frac{1}{2}
\norm{\lambda(x(t)-x^*)+t\dot{x}(t)}^2+\frac{\xi}{2}\norm{x(t)-x^*}^2,
\end{equation*}
where $x^*$ is a minimizer of $F$ and $\lambda$ and $\xi$ are two real numbers. 
The function $\HH$ is defined from $\EE$ and it depends on another real parameter $p$ : 
\begin{equation*}
\HH(t)=t^p\EE(t).
\end{equation*} 
Using the following notations:
\begin{align*}
a(t)&=t(F(x(t))-F^*),\\
b(t)&=\frac{1}{2t}\norm{\lambda(x(t)-x^*)+t\dot{x}(t)}^2,\\
c(t)&=\frac{1}{2t}\norm{x(t)-x^*}^2,
\end{align*} 
we have:
\begin{equation*}
\EE(t)=t(a(t)+b(t)+\xi c(t)).
\end{equation*}
From now on we will choose 
\begin{equation*}
\xi=\lambda(\lambda+1-\f),
\end{equation*}
and we will use the following Lemma whose proof is postponed to Appendix~\ref{appendix}:
\begin{lemma}\label{LemmeFonda}
If $F$ satisfies $\hu(\gamma)$ for any $\gamma\geq 1$, and if $\xi=\lambda(\lambda-\f+1)$ then 
 \begin{equation*}
 \HH'(t)\leqslant t^{p}\left((2-\gamma\lambda+p)a(t)+(2\lambda+2-2\f+p)b(t)+\lambda(\lambda+1-\f)(-2\lambda+p)c(t)\right).
 \end{equation*} 
\end{lemma}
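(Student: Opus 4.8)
The plan is to obtain the differential inequality by a direct computation, differentiating $\HH(t)=t^p\EE(t)$ as $\HH'(t)=pt^{p-1}\EE(t)+t^p\EE'(t)$ and controlling $\EE'(t)$ using the ODE \eqref{ODE} together with the flatness hypothesis $\hu(\gamma)$. Writing $y(t)=x(t)-x^*$, I would first differentiate $\EE$ term by term. The term $t^2(F(x)-F^*)$ produces $2a(t)+t^2\langle\nabla F(x),\dot x\rangle$; the term $\tfrac12\norm{\lambda y+t\dot x}^2$ produces $\langle \lambda y+t\dot x,(\lambda+1)\dot x+t\ddot x\rangle$; and $\tfrac{\xi}{2}\norm{y}^2$ produces $\xi\langle y,\dot x\rangle$. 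The decisive move is to substitute the ODE in the form $t\ddot x=-\alpha\dot x-t\nabla F(x)$, so that $(\lambda+1)\dot x+t\ddot x=(\lambda+1-\alpha)\dot x-t\nabla F(x)$.

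Two simplifications then occur. The contributions $t^2\langle\nabla F(x),\dot x\rangle$ coming from the first and second terms cancel, and after inserting the choice $\xi=\lambda(\lambda+1-\alpha)$ the two surviving $\langle y,\dot x\rangle$ terms combine. This leaves
\begin{equation*}
\EE'(t)=2a(t)+2\lambda(\lambda+1-\alpha)\langle y,\dot x\rangle+t(\lambda+1-\alpha)\norm{\dot x}^2-\lambda t\langle\nabla F(x),y\rangle.
\end{equation*}
I would then apply $\hu(\gamma)$ in the equivalent form $\langle\nabla F(x),x-x^*\rangle\geqslant\gamma(F(x)-F^*)$; for $\lambda\geqslant 0$ this bounds the last term by $-\lambda t\langle\nabla F(x),y\rangle\leqslant-\gamma\lambda\,a(t)$, and after factoring out $(\lambda+1-\alpha)$ one is left with $(2-\gamma\lambda)a(t)+(\lambda+1-\alpha)\big(2\lambda\langle y,\dot x\rangle+t\norm{\dot x}^2\big)$.

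It remains to re-express the velocity terms through $a,b,c$. Expanding the definition of $b(t)$ yields the identity $2\lambda\langle y,\dot x\rangle+t\norm{\dot x}^2=2b(t)-2\lambda^2 c(t)$, whence
\begin{equation*}
\EE'(t)\leqslant(2-\gamma\lambda)a(t)+2(\lambda+1-\alpha)b(t)-2\lambda^2(\lambda+1-\alpha)c(t).
\end{equation*}
Finally, using $\EE(t)=t(a(t)+b(t)+\xi c(t))$ so that $pt^{p-1}\EE(t)=pt^p\big(a(t)+b(t)+\lambda(\lambda+1-\alpha)c(t)\big)$, I would add $t^p\EE'(t)$ and collect the coefficients of $a$, $b$ and $c$, which reproduce exactly $(2-\gamma\lambda+p)$, $(2\lambda+2-2\alpha+p)$ and $\lambda(\lambda+1-\alpha)(-2\lambda+p)$, as claimed.

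The computation itself is elementary; the main obstacle is the bookkeeping, namely guaranteeing the cancellation of the $t^2\langle\nabla F(x),\dot x\rangle$ terms and the correct recombination of the cross terms $\langle y,\dot x\rangle$ once $\xi=\lambda(\lambda+1-\alpha)$ is substituted. A more delicate point is the application of $\hu(\gamma)$: the inequality $\langle\nabla F(x),y\rangle\geqslant\gamma(F(x)-F^*)$ is only local (valid for $x(t)$ near $x^*$) and preserving the direction of the resulting bound requires $\lambda\geqslant 0$, so the estimate must be understood on the region where these hypotheses are in force.
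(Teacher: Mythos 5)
Your computation is correct and follows essentially the same route as the paper: differentiate $\EE$, substitute the ODE so the $t^2\langle\nabla F(x),\dot x\rangle$ terms cancel, choose $\xi=\lambda(\lambda+1-\f)$ to eliminate the $\langle \dot x, x-x^*\rangle$ cross term, apply $\hu(\gamma)$ to bound $-\lambda t\langle\nabla F(x),x-x^*\rangle$ by $-\gamma\lambda a(t)$, and then collect coefficients in $\HH'=pt^{p-1}\EE+t^p\EE'$. Your closing caveats (that $\hu(\gamma)$ is local and that the bound requires $\lambda\geqslant 0$) are accurate and consistent with how the lemma is used in the paper, where $\lambda$ is always taken positive.
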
 
Note that this inequality is actually an equality for the specific choice $F(x)=\vert x\vert^\gamma$, $\gamma>1$.
\subsection{Proof of Theorems \ref{Theo1} and \ref{Theo1b}}
In this section we prove Theorem \ref{Theo1} and Theorem \ref{Theo1b}. Note that a complete proof of Theorem~\ref{Theo1}, including the optimality of the rate, can be found in the unpublished report \cite{AujolDossal} under the hypothesis that $(F-F^*)^\frac{1}{\gamma}$ is convex. The proof of both Theorems are actually similar. The choice of $p$ and $\lambda$ are the same but, to prove the first point, due to the value of $\alpha$, the function $\HH$ is non-increasing and sum of non-negative terms, which simplifies the analysis and necessitates less hypotheses to conclude.

We choose here $p=\frac{2\gamma \f}{\gamma+2}-2$ and $\lambda=\frac{2\f}{\gamma+2}$
and thus 
\begin{equation*}
\xi=\frac{2\alpha\gamma}{(\gamma+2)^2}(1+\frac{2}{\gamma}-\alpha).
\end{equation*}
From Lemma \ref{LemmeFonda}, it appears that:
\begin{equation}\label{ineqH1}
\HH'(t)\leqslant K_1t^{p}c(t)
\end{equation}
where the real constant $K_1$ is given by:
\begin{eqnarray*}
K_1 & = & \lambda(\lambda+1-\f)(-2\lambda+p)\\
&=& \frac{2\f}{\gamma+2} \left(\frac{2\f}{\gamma+2}+1-\f\right)
\left(-2 \frac{2\f}{\gamma+2}+\frac{2\gamma \f}{\gamma+2}-2
\right)
\\ & = &
\frac{4\f}{(\gamma+2)^3}
\left(2\f+\gamma+2-\f \gamma -2 \f \right)
\left(-2\f+\gamma \f - \gamma -2
\right)
\\ & = &
\frac{4\f}{(\gamma+2)^3}
\left(\gamma+2-\f \gamma\right)
\left(\f(-2+\gamma) - \gamma -2
\right).
\end{eqnarray*}
Hence:
\begin{equation}\label{eqdefK1}
K_1=\frac{4\f\gamma}{(\gamma+2)^3}
\left(1+\frac{2}{\gamma}-\f\right)
\left(\f(-2+\gamma) - \gamma -2
\right).
\end{equation}

Consider first the case when: $\f\leqslant 1+\frac{2}{\gamma}$. In that case, we observe that: $\xi\geq 0$, so that the energy $\HH$ is actually a sum of non-negative terms. Coming back to \eqref{ineqH1}, we have:
\begin{equation}
    \HH'(t)\leqslant K_1t^{p}c(t).\label{ineqH1b1}
\end{equation}
Since $\f\leqslant 1+\frac{2}{\gamma}$, the sign of the constant $K_1$ is the same as that of $\f(-2+\gamma) - \gamma -2$, and thus $K_1\leqslant 0$ for any $\gamma\geqslant 1$. According to \eqref{ineqH1b1}, the energy $\HH$ is thus non-increasing and bounded i.e.:
$$\forall t\geqslant t_0,~\HH(t)\leqslant \HH(t_0).$$
Since $\HH$ is a sum of non-negative terms, it follows directly that:
$$\forall t\geqslant t_0,~t^{p+2}(F(x(t))-F^*)\leqslant \HH(t_0),$$
which concludes the proof of Theorem~\ref{Theo1}.

Consider now the case when: $\alpha > 1+\frac{2}{\gamma}$. In that case, we first observe that: $\xi<0$, so that $\HH$ is not a sum of non-negative functions anymore, and an additional growth condition $\hd(2)$ will be needed to bound the term in $\norm{x(t)-x^*}^2$. Coming back to \eqref{ineqH1}, we have:
\begin{equation}
    \HH'(t)\leqslant K_1t^{p}c(t).\label{ineqH1b2}
\end{equation}
Since $\alpha > 1+\frac{2}{\gamma}$, the sign of the constant $K_1$ is the opposite of the sign of $\f(\gamma -2)-(\gamma+2)$. Moreover, since $\gamma\leqslant 2$, then $\f(\gamma -2)-(\gamma+2)<0$ and thus $K_1 >0$.

Using Hypothesis $\hd(2)$ and the uniqueness of the minimizer, there exists $K>0$ such that:
\begin{equation*}
Kt\norm{x(t)-x^*}^2\leqslant t(F(x(t))-F^*)=a(t),
\end{equation*}
and thus 
\begin{equation}
c(t)\leqslant \frac{1}{2Kt^2}a(t).\label{eqct}
\end{equation}
Since $\xi<0$ with our choice of parameters, we get: 
\begin{eqnarray}
\HH(t) &\geqslant & t^{p+1}(a(t) + \xi c(t))
\geqslant  t^{p+1}(1+\frac{\xi}{2Kt^2})a(t).\label{H:bound}
\end{eqnarray}
It follows that there exists $t_1$ such that for all $t\geqslant  t_1$, $\HH(t)\geqslant  0$ and: 
\begin{equation}
\HH(t) \geqslant \frac{1}{2}t^{p+1}a(t).\label{eqat}
\end{equation}
From \eqref{ineqH1b2}, \eqref{eqct} and \eqref{eqat}, we get:
\begin{equation*}
\HH'(t)\leqslant \frac{K_1}{K}\frac{\HH(t)}{t^3}.
\end{equation*}
From the Gr\"onwall Lemma in its differential form, there exists $A>0$ such that for all $t\geqslant  t_1$, we have: $\HH(t)\leqslant A$. According to \eqref{eqat}, 
we then conclude that $t^{p+2}(F(x(t))-F^*)=t^{p+1}a(t)$ is bounded which concludes the proof of Theorem \ref{Theo1b}.


\subsection{Proof of Proposition~\ref{prop_optimal} (Optimality of the convergence rates)}
Before proving the optimality of the convergence rate stated in Proposition~\ref{prop_optimal}, we need the following technical lemma:
\begin{lemma} \label{lemmatech}
Let $y$ a continuously differentiable function with values in $\mathbb R$.
Let $T>0$ and $\epsilon > 0$. If $y$ is bounded, then there exists $t_1 >T$ such that:
\begin{equation*} 
|\dot{y}(t_1) | \leqslant \frac{\epsilon}{t_1}.
\end{equation*}
\end{lemma}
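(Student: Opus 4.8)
The plan is to argue by contradiction and reduce the statement to the elementary fact that a function whose derivative stays above $\epsilon/t$ grows at least logarithmically, hence cannot be bounded. Concretely, I would suppose the conclusion fails: there exist $T>0$ and $\epsilon>0$ such that
$$|\dot y(t)| > \frac{\epsilon}{t} \quad \text{for every } t>T.$$
The first key step is to promote this pointwise bound into a statement about the sign of $\dot y$. Since the right-hand side is strictly positive, $\dot y$ never vanishes on $(T,+\infty)$; as $y$ is continuously differentiable, $\dot y$ is continuous there, so by the intermediate value theorem it must keep a constant sign on that interval. Up to replacing $y$ by $-y$, which alters neither the hypothesis nor the conclusion, I may therefore assume $\dot y(t) > \frac{\epsilon}{t} > 0$ for all $t>T$.

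The second step is a direct integration of this lower bound. For any $t>T$,
$$y(t)-y(T) = \int_T^t \dot y(s)\,ds > \int_T^t \frac{\epsilon}{s}\,ds = \epsilon\,\ln\!\left(\frac{t}{T}\right),$$
so that $y(t) > y(T) + \epsilon\,\ln(t/T)$. Letting $t\to+\infty$, the right-hand side diverges to $+\infty$, hence $y(t)\to+\infty$. This contradicts the assumed boundedness of $y$, and the contradiction proves the lemma.

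The computation itself is entirely routine, so the only delicate point is the sign-control step: one must be sure that $\dot y$ does not oscillate across zero, which is exactly where continuity of $\dot y$ together with the intermediate value theorem is used. Everything else is a one-line estimate, and the logarithmic divergence of $\int \frac{ds}{s}$ is what makes boundedness of $y$ incompatible with a derivative bounded below by $\epsilon/t$.
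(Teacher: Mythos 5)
Your proof is correct and follows essentially the same route as the paper's: both reduce to the observation that if $|\dot y|$ stays above $\epsilon/t$ then $\dot y$ keeps a constant sign and integrating the bound forces $y$ to diverge logarithmically, contradicting boundedness. You merely organize it as a single global contradiction (with the sign control supplied by the intermediate value theorem) and write out the integral explicitly, whereas the paper phrases it as a two-case split; the content is the same.
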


\begin{proof}

We split the proof into two cases.
\begin{enumerate}
\item 
There exists $t_1 >T$ such that $\dot{y}(t_1)=0$.
\item
$\dot{y}(t)$ is of constant sign for $t> T$. For instance we assume $\dot{y}(t)>0$.
By contradiction, let us assume that $\dot{y}(t)>\frac{\epsilon}{t}$ $\forall t > T$.
Then $y(t)$ cannot be a bounded function as assumed.
\end{enumerate}
\end{proof}

Let us now prove the Proposition~\ref{prop_optimal}: the idea of the proof is the following: we first show that $\HH$ is bounded from below. Since $\HH$ is a sum of 3 terms including the term $F-F^*$, we then show that given $t_1 \geq t_0$, there always exists a time $t \geq t_1$ such that the value of $\HH$ is concentrated on the term $F-F^*$.

We start the proof by using the fact that, for the function $F(x)=\vert x\vert^{\gamma}$, $\gamma>1$, the inequality of Lemma~\ref{LemmeFonda} is actually an equality. Using the values $p=\frac{2\gamma \f}{\gamma+2}-2$ and $\lambda=\frac{2\f}{\gamma+2}$ of Theorems~\ref{Theo1} and \ref{Theo1b}, we have a closed form for the derivative of function $\HH$:
\begin{equation}\label{eqH1}
\HH'(t) = K_1 t^{p}c(t) =\frac{K_1}{2} t^{p-1}\vert x(t)\vert^{2},
\end{equation}
where $K_1$ is the constant given in \eqref{eqdefK1}. 
We will now prove that it exists $\ell>0$ such that for $t$ large enough:
\begin{equation*}
\HH(t) \geqslant \ell.
\end{equation*}
To prove that point we consider two cases depending on the sign of 
$\alpha-(1+\frac{2}{\gamma})$.
\begin{enumerate}
\item Case when $\alpha\leqslant 1+\frac{2}{\gamma}$, $\xi\geqslant 0$ and $K_1\leqslant 0$. We can first observe that $\HH$ is a non negative and non increasing function. 
Moreover it exists $\tilde t\geqslant t_0$ such that for $t\geqslant \tilde t$, $|x(t)|\leqslant 1$ and:
\begin{equation*}
t^pc(t)\leqslant \frac{t^pa(t)}{2t^2}\leqslant \frac{\HH(t)}{t^3},
\end{equation*}
which implies using \eqref{eqH1} that:
\begin{equation*}
|\HH'(t)|\leqslant |K_1|\frac{\HH(t)}{t^3}.
\end{equation*}
If we denote $G(t)=\ln (\HH(t))$ we get for all $t\geqslant \tilde t$,
\begin{equation*}
|G(t)-G(\tilde t)|\leqslant \int_{\tilde t}^t\frac{|K_1|}{s^3}ds.
\end{equation*}
We deduce that $|G(t)|$ is bounded below and then that it exists $\ell>0$ such that for $t$ large enough:
\begin{equation*}
\HH(t) \geqslant \ell,
\end{equation*}
\item Case when $\alpha> 1+\frac{2}{\gamma}$, $\xi< 0$ and $K_1> 0$. This implies in particular that $\HH$ is non-decreasing. Moreover, from Theorem~\ref{Theo1b}, $\HH$ is bounded above. Coming back to the inequality \eqref{H:bound}, we observe that $\HH(t_0)>0$ provided that $1+\frac{\xi}{2t_0^2}>0$, with $K=1$ and $\xi = \lambda(\lambda-\alpha+1)$, i.e.:
$$t_0>\sqrt{\frac{\alpha\gamma}{(\gamma +2)^2}(\alpha-(1+\frac{2}{\gamma}))}.$$

In particular, we have that for any $t\geqslant t_0$ 
\begin{equation*}
\HH(t) \geqslant \ell,
\end{equation*}
with $\ell=\HH(t_0)$
\end{enumerate}
Hence for any $\alpha>0$ and for $t$ large enough
\begin{equation*}
a(t)+b(t)+ \xi c(t) \geqslant \frac{\ell}{t^{p+1}}.
\end{equation*}
Moreover, since $c(t)=o(a(t))$ when $t \to + \infty$, we have that for $t$ large enough, 
\begin{equation*}
a(t)+b(t) \geqslant \frac{\ell}{2 t^{p+1}}.
\end{equation*}
%

Let $T>0$ and $\epsilon >0$. We set:
\begin{equation*}
y(t):=t^{\lambda} x(t),
\end{equation*}
where: $\lambda =\frac{2 \alpha}{\gamma+2}$. From the Theorem~\ref{Theo1} and Theorem \ref{Theo1b}, we know that $y(t)$ is bounded.
Hence, from Lemma~\ref{lemmatech}, there exists $t_1 > T$ such that 
\begin{equation} \label{eqt1}
|\dot{y}(t_1) |\leqslant \frac{\epsilon}{t_1}.
\end{equation}
But:
\begin{equation*}
\dot{y}(t) = t^{\lambda-1} \left(\lambda x(t)
+t \dot{x}(t)
\right).
\end{equation*}
Hence using \eqref{eqt1}:
\begin{equation*}
t_1^{\lambda} \left|\lambda x(t_1)
+t_1 \dot{x}(t_1)
\right|
\leqslant \epsilon.
\end{equation*}
We recall that: $b(t)=\frac{1}{2t}\norm{\lambda(x(t)-x^*)+t\dot{x}(t)}^2$.
We thus have:
\begin{equation*}
b(t_1) \leqslant \frac{\epsilon^2}{2 t_1^{2\lambda+1}}.
\end{equation*}
Since $\gamma \leqslant 2$, $\lambda =\frac{2 \alpha}{\gamma+2}$ and $p=\frac{2\gamma \f}{\gamma+2}-2$, we have
$
2\lambda+1
\geq 
p+1
$, and thus
\begin{equation*}
b(t_1) \leqslant \frac{\epsilon^2}{2 t_1^{p+1}}.
\end{equation*}


For $\epsilon =\sqrt{\frac{
\ell}{2}}$ for example, there exists thus some $t_1 >T$ such that $b(t_1) \leqslant \frac{\ell}{4 t_1^{p+1}}$.
 Then $a(t_1) \geqslant  \frac{\ell}{4 t_1^{p+1}}$, i.e.
 $F(x(t_1))-F^* \geqslant  \frac{\ell}{4 t_1^{p+2}}$.
 Since $p+2= \frac{2 \gamma \alpha}{\gamma +2}$, this concludes the proof.

\subsection{Proof of Theorem \ref{Theo2}}
We detail here the proof of Theorem \ref{Theo2}.

Let us consider $\gamma_1>2$, $\gamma_2 >2$, and $\f\geqslant  \frac{\gamma_1+2}{\gamma_1-2}$. We consider here functions $\HH$ for all $x^*$ in the set $X^*$ of minimizers of $F$ and prove that these functions are uniformely bounded. More precisely for any $x^*\in X^*$ we define $\HH(t)$ with $p=\frac{4}{\gamma_1-2}$ and $\lambda=\frac{2}{\gamma_1-2}$. 
With this choice of $\lambda$ and $p$, using Hypothesis $\hu(\gamma_1)$ we have from Lemma~\ref{LemmeFonda}:
\begin{equation*}
\HH'(t)\leqslant 2t^{\frac{4}{\gamma_1-2}}\left(\frac{\gamma_1+2}{\gamma_1-2}-\f\right)b(t).
\end{equation*} 
which is non-positive when $\f\geqslant\frac{\gamma_1+2}{\gamma_1-2}$, which implies that the function $\HH$ is bounded above. Hence for any choice of $x^*$ in the set of minimizers $X^*$, the function $\HH$ is bounded above and since the set of minimizers is bounded (F is coercive), there exists $A>0$ and $t_0$ such that for all choices of $x^*$ in $X^*$, 
\begin{equation*} \label{inegHHAtz}
\HH(t_0)\leqslant A,
\end{equation*}
which implies that for all $x^* \in X^{*}$ and
for all $t\geqslant  t_0$
\begin{equation*} \label{inegHHA}
\HH(t)\leqslant A.
\end{equation*}
Hence for all $t\geqslant  t_0$ and for all $x^*\in X^*$
\begin{equation*}\label{BoundWold}
t^{\frac{4}{\gamma_1-2}}t^2(F(x(t))-F^*)\leqslant \frac{\vert \xi\vert}{2}
t^{\frac{4}{\gamma_1-2}}\norm{x(t)-x^*}^2
+A,
\end{equation*} 
which implies that 
\begin{equation}\label{BoundW}
t^{\frac{4}{\gamma_1-2}}t^2(F(x(t))-F^*)\leqslant \frac{\vert \xi\vert}{2}
t^{\frac{4}{\gamma_1-2}}d(x(t),X^{*})^2
+A.
\end{equation} 

We now set:
\begin{equation}\label{defv}
v(t):=t^{\frac{4}{\gamma_2-2}}d(x(t),X^*)^2.
\end{equation}
Using  \eqref{BoundW} we have:
\begin{equation}\label{Boundu1bos}
t^{\frac{2 \gamma_1}{\gamma_1-2}}(F(x(t))-F^*)\leqslant \frac{\vert \xi\vert}{2}
t^{\frac{4}{\gamma_1-2}-\frac{4}{\gamma_2-2}} 
v(t)+A.
\end{equation}
Using the hypothesis $\hd(\gamma_2)$ applied under the form given by Lemma \ref{lem:H2} (since $X^*$ is compact), there exists $K>0$ such that
\begin{equation*}
K\left(t^{-\frac{4}{\gamma_2-2}}v(t)\right)^{\frac{\gamma_2}{2}}\leqslant F(x(t))-F^*,
\end{equation*}
which is equivalent to 
\begin{equation*}
Kv(t)^{\frac{\gamma_2}{2}}
t^{\frac{-2\gamma_2}{\gamma_2-2}}
\leqslant F(x(t))-F^*.
\end{equation*}
Hence:
\begin{equation*}
K
t^{\frac{2\gamma_1}{\gamma_1-2}} t^{\frac{-2\gamma_2}{\gamma_2-2}}
v(t)^{\frac{\gamma_2}{2}}\leqslant 
t^{\frac{2\gamma_1}{\gamma_1-2}} (F(x(t))-F^*).
\end{equation*}
Using \eqref{Boundu1bos}, we obtain: 
\begin{equation*}
K
t^{\frac{2\gamma_1}{\gamma_1-2}-\frac{2\gamma_2}{\gamma_2-2}}
v(t)^{\frac{\gamma_2}{2}}\leqslant \frac{|\xi|}{2} 
t^{\frac{4}{\gamma_1-2}-\frac{4}{\gamma_2-2}} 
v(t)+A,
\end{equation*}
i.e.:
\begin{equation}\label{vbounded}
K
v(t)^{\frac{\gamma_2}{2}}\leqslant \frac{|\xi|}{2} 
v(t)+A
t^{\frac{4}{\gamma_2-2}-\frac{4}{\gamma_1-2}}.
\end{equation}
Since $2<\gamma_1 \leqslant \gamma_2$, we deduce that $v$ is bounded.
Hence, using  \eqref{Boundu1bos} there exists some positive constant $B$ such that:
\begin{equation*}
F(x(t))-F^*\leqslant 
B t^{\frac{-2 \gamma_2}{\gamma_2-2}}
+A t^{\frac{-2 \gamma_1}{\gamma_1-2}}.
\end{equation*}
Since $2<\gamma_1 \leqslant  \gamma_2$, we have $\frac{-2 \gamma_2}{\gamma_2-2} \geqslant  \frac{-2 \gamma_1}{\gamma_1-2}$.
Hence we deduce that  $F(x(t))-F^* = O \left( t^{\frac{-2 \gamma_2}{\gamma_2-2}}\right)$.\\



\subsection{Proof of Corollary~\ref{Corol2}}
We are now in position to prove Corollary~\ref{Corol2}. The first point of Corollary~\ref{Corol2} is just a particular instance of Theorem~\ref{Theo2}.
In the sequel, we prove the second point of Corollary~\ref{Corol2}.

Let $t\geqslant  t_0$ and $\tilde x\in X^*$ such that 
\begin{equation*}
\|x(t)-\tilde x\|=d(x(t),X^*).
\end{equation*}
We previously proved that there exists $A>0$ such that for any $t\geqslant  t_0$ and any $x^*\in X^*$,
\begin{equation*}
\HH(t)\leqslant A.
\end{equation*}
For the choice $x^*=\tilde x$ this inequality ensures that 
\begin{equation*}
\frac{t^{\frac{4}{\gamma-2}}}{2}\norm{\lambda (x(t)-\tilde x)+t\dot x(t)}^2+t^{\frac{4}{\gamma-2}}\frac{\xi}{2}d(x(t),\tilde x)^2\leqslant A,
\end{equation*}
which is equivalent to 
\begin{equation*}
\frac{t^{\frac{4}{\gamma-2}}}{2}\norm{\lambda (x(t)-\tilde x)+t\dot x(t)}^2\leqslant \frac{|\xi|}{2}v(t)+A,
\end{equation*}
where $v(t)$ is defined in \eqref{defv} with $\gamma=\gamma_2$.
Using the fact that the function $v$ is bounded (a consequence of \eqref{vbounded}) we deduce that there exists a positive constant $A_1>0$ such that:
\begin{equation*}
\norm{\lambda (x(t)-\tilde x)+t\dot x(t)}\leqslant \frac{A_1}{t^{\frac{2}{\gamma-2}}}.
\end{equation*}
Thus:
\begin{equation*}
t\norm{\dot x(t)}\leqslant \frac{A_1}{t^{\frac{2}{\gamma-2}}}+|\lambda|d(x(t),\tilde x)=\frac{A_1+|\lambda|\sqrt{v(t)}}{t^{\frac{2}{\gamma-2}}}.
\end{equation*}
Using once again the fact that the function $v$ is bounded we deduce that there exists a real number $A_2$ such that 
\begin{equation*}
\norm{\dot x(t)}\leqslant \frac{A_2}{t^{\frac{\gamma}{\gamma-2}}},
\end{equation*}
which implies that $\norm{\dot x(t)}$ is an integrable function. As a consequence, we deduce that the trajectory $x(t)$ has a finite length.

\subsection{Proof of Proposition \ref{prop:gap}}
The idea of the proof is very similar to that of Proposition~\ref{prop_optimal} (optimality of the convergence rate in the sharp case i.e. when $\gamma \in (1,2]$).

For the exact same choice of parameters $p=\frac{2\gamma\alpha}{\gamma+2}-2$ and $\lambda=\frac{2\alpha}{\gamma+2}$ and assuming that $1+\frac{2}{\gamma}<\alpha < \frac{\gamma+2}{\gamma-2}$, we first show that the energy $\mathcal H$ is non-decreasing and then:
\begin{equation}
\forall t\geqslant t_0,~\mathcal H(t) \geqslant \ell,\label{infH}
\end{equation}
where: $\ell=\mathcal H(t_0)>0$. Indeed, since $\gamma>2$ and $\alpha <\frac{\gamma+2}{\gamma-2}$, a straightforward computation shows that: $\lambda^2 -|\xi|>0$, so that:
\begin{eqnarray*}
\mathcal H(t_0) 
&=& t_0^{p+2}|x(t_0)|^\gamma + \frac{t_0^p}{2}\left(|\lambda x(t_0)+t_0\dot x(t_0)|^2 -|\xi||x(t_0)|^2\right)\\
&=& t_0^{p+2}|x(t_0)|^\gamma + \frac{t_0^p}{2}\left( \lambda^2 -|\xi|\right)|x(t_0)|^2 >0,
\end{eqnarray*}
without any additional assumption on the initial time $t_0>0$. 

Let $T>t_0$. We set: $y(t)=t^\lambda x(t)$. If $y(t)$ is bounded as it is in Proposition~\ref{prop_optimal}, by the exact same arguments, we prove that there exists $t_1>T$ such that: $b(t_1) \leq \frac{\ell}{4t_1^{p+1}}$. Moreover since $\xi<0$ we deduce from \eqref{infH} that:
$$t_1^{p+1}(a(t_1)+b(t_1)) \geqslant \ell.$$Hence:
$$a(t_1)=t_1(F(x(t_1)-F^*) \geqslant \frac{\ell}{4t_1^{p+1}},$$
i.e.: $F(x(t_1))-F^* \geqslant \frac{\ell}{4t_1^{p+2}} = \frac{\ell}{4t_1^\frac{2\alpha\gamma}{\gamma+2}}$.

If $y(t)$ is not bounded, then the proof is even simpler: indeed, in that case, for any $K>0$, there exists $t_1\geqslant T$ such that: $y(t_1)\geq K$, hence:
$$F(x(t_1))-F^*=|x(t_1)|^\gamma \geqslant \frac{K}{t_1^{\lambda\gamma}}=\frac{K}{t_1^{\frac{2\alpha\gamma}{\gamma+2}}},$$
which concludes the proof.

\section*{Acknowledgement}
This study has been carried out with financial support from the French state, managed by the French National Research Agency (ANR GOTMI)  (ANR-16-VCE33-0010-01) and partially supported by ANR-11-LABX-0040-CIMI within the program
ANR-11-IDEX-0002-02. J.-F. Aujol is a member of Institut Universitaire de France.


\appendix

\section{Proof of Lemma~\ref{LemmeFonda}} \label{appendix}
We prove here Lemma~\ref{LemmeFonda}. Notice that the computations are standard (see e.g. \cite{AujolDossal}).



\begin{lemma} \label{lemma_tec}
\begin{eqnarray*}
\mathcal{E}'(t)&=&2a(t)+\lambda t\ps{-\nabla F(x(t))}{x(t)-x^*}
 + (\xi- \lambda (\lambda + 1 -\f)
)\ps{\dot{x}(t)}{x(t)-x^*}\\
&&+
2(\lambda+1-\f)
b(t)
-2\lambda^2(\lambda+1-\f) c(t)
\end{eqnarray*}\label{lem:tec1}
\end{lemma}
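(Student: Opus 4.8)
The plan is to compute $\EE'(t)$ directly by differentiating its three summands and then reorganizing the result using the ODE \eqref{ODE}. Write $u(t):=x(t)-x^*$ so that $\dot u=\dot x$, and abbreviate $x=x(t)$, $\dot x=\dot x(t)$. The first summand $t^2(F(x)-F^*)$ differentiates to $2t(F(x)-F^*)+t^2\ps{\nabla F(x)}{\dot x}$, whose leading term is exactly $2a(t)$. The third summand $\frac{\xi}{2}\norm{u}^2$ simply gives $\xi\ps{u}{\dot x}$.

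The key computation is the middle summand $\frac12\norm{\lambda u+t\dot x}^2$. Setting $w:=\lambda u+t\dot x$, I would compute $\dot w=(\lambda+1)\dot x+t\ddot x$, and then substitute the ODE in the form $t\ddot x=-\f\dot x-t\nabla F(x)$ to obtain $\dot w=(\lambda+1-\f)\dot x-t\nabla F(x)$. Expanding $\ps{w}{\dot w}$ then yields four terms:
\[
\lambda(\lambda+1-\f)\ps{u}{\dot x}-\lambda t\ps{u}{\nabla F(x)}+t(\lambda+1-\f)\norm{\dot x}^2-t^2\ps{\dot x}{\nabla F(x)}.
\]
At this stage two simplifications occur: the term $t^2\ps{\nabla F(x)}{\dot x}$ coming from the first summand cancels with $-t^2\ps{\dot x}{\nabla F(x)}$, and the term $-\lambda t\ps{u}{\nabla F(x)}$ is precisely $\lambda t\ps{-\nabla F(x)}{x-x^*}$.

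The step I expect to be the crux is handling the remaining kinetic term $t(\lambda+1-\f)\norm{\dot x}^2$, which is what produces the $b$ and $c$ contributions appearing in the statement. Since $2tb(t)=\norm{\lambda u+t\dot x}^2=\lambda^2\norm{u}^2+2\lambda t\ps{u}{\dot x}+t^2\norm{\dot x}^2$ and $2tc(t)=\norm{u}^2$, I would solve for $t\norm{\dot x}^2=2b(t)-2\lambda^2 c(t)-2\lambda\ps{u}{\dot x}$ and substitute. Collecting the three sources of $\ps{u}{\dot x}$ — namely $\lambda(\lambda+1-\f)$ from the middle summand, $-2\lambda(\lambda+1-\f)$ from this substitution, and $\xi$ from the third summand — gives the coefficient $\xi-\lambda(\lambda+1-\f)$, while the same substitution simultaneously supplies $2(\lambda+1-\f)b(t)-2\lambda^2(\lambda+1-\f)c(t)$. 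Assembling all the pieces then reproduces exactly the claimed formula. The argument is entirely a bookkeeping computation; the only real care needed is the consistent use of the ODE substitution for $t\ddot x$ and the rewriting of $t\norm{\dot x}^2$ through $b$ and $c$.
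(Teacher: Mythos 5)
Your computation is correct and follows essentially the same route as the paper: differentiate the three summands, eliminate $\ddot x$ via the ODE, and use the expansion of $\norm{\lambda(x-x^*)+t\dot x}^2$ to convert $t\norm{\dot x}^2$ into $b$, $c$ and a $\ps{\dot x}{x-x^*}$ correction, which is exactly how the coefficient $\xi-\lambda(\lambda+1-\f)$ arises in the paper's proof as well. The only cosmetic difference is that you substitute the ODE once through $\dot w$ where the paper applies it in two separate inner products.
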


\begin{proof} Let us differentiate the energy $\mathcal E$:
\begin{eqnarray*}
\mathcal{E}'(t)&=&2a(t)+t^2\ps{\nabla F(x(t))}{\dot{x}(t)}+\ps{\lambda \dot{x}(t)+t\ddot{x}(t)+\dot{x}(t)}{\lambda(x(t)-x^*)+t\dot{x}(t)}\\
&&+\xi\ps{\dot{x}(t)}{x(t)-x^*}\vspace{.25cm}\\
&=&2a(t)+t^2\ps{\nabla F(x(t))+\ddot{x}(t)}{\dot{x}(t)}+(\lambda+1)t\norm{\dot{x}(t)}^2+\lambda t\ps{\ddot{x}(t)}{x(t)-x^*}\\
&&+(\lambda(\lambda+1)+\xi)\ps{\dot{x}(t)}{x(t)-x^*}\vspace{.25cm}\\
&=&2a(t)+t^2\ps{-\frac{\f}{t}\dot{x(t)}}{\dot{x}(t)}+(\lambda+1)t\norm{\dot{x}(t)}^2+\lambda t\ps{\ddot{x}(t)}{x(t)-x^*}\\
&&+(\lambda(\lambda+1)+\xi)\ps{\dot{x}(t)}{x(t)-x^*}\\
&=&2a(t)+t(\lambda+1-\f)\norm{\dot{x}(t)}^2
+\lambda t\ps{\ddot{x}(t)}{x(t)-x^*}+
(\lambda(\lambda+1)+\xi)\ps{\dot{x}(t)}{x(t)-x^*}.
\end{eqnarray*}
Using the ODE \eqref{ODE}, we get:
\begin{eqnarray*}
\mathcal{E}'(t)&=&2a(t)+t(\lambda+1-\f)\norm{\dot{x}(t)}^2 +\lambda t\ps{-\nabla F(x(t))-\frac{\f}{t}\dot{x(t)}}{x(t)-x^*}\\
&&+ (\lambda(\lambda+1)+\xi)\ps{\dot{x}(t)}{x(t)-x^*}\\
&=&2a(t)+t(\lambda+1-\f)\norm{\dot{x}(t)}^2+\lambda t\ps{-\nabla F(x(t))}{x(t)-x^*}\\
&&+ (\lambda(\lambda+1)-\f\lambda+\xi)\ps{\dot{x}(t)}{x(t)-x^*}.
\end{eqnarray*}
Observing now that:
\begin{equation*}\label{eqReformEner}
\frac{1}{t}\norm{\lambda(x(t)-x^*)+t\dot{x}(t)}^2=t\norm{\dot{x}(t)}^2+2\lambda\ps{\dot{x}(t)}{x(t)-x^*}+\frac{\lambda^2}{t}\norm{x(t)-x^*}^2,
\end{equation*}
we can write:
\begin{eqnarray*}
\mathcal{E}'(t)&=&2a(t)+\lambda t\ps{-\nabla F(x(t))}{x(t)-x^*}  + (\xi- \lambda (\lambda + 1 -\f)
)\ps{\dot{x}(t)}{x(t)-x^*}
\\
&&+
(\lambda+1-\f)\frac{1}{t}\norm{\lambda(x(t)-x^*)+t\dot{x}(t)}^2-\frac{\lambda^2(\lambda+1-\f)}{t}\norm{x(t)-x^*}^2.
\end{eqnarray*}

\end{proof}


\begin{corollary}\label{lemma_tec_conv_beta2}
 If $F$ satisfies the hypothesis $\hu(\gamma)$ 
 and
if $\xi = \lambda (\lambda + 1 -\f)$, then:
 \begin{align}\label{EqConvBeta}
 \mathcal{E}'(t)\leqslant&
 (2- \gamma \lambda) a(t)
+2(\lambda+1-\f)
b(t)
-2\lambda^2(\lambda+1-\f) c(t)
\end{align}
\end{corollary}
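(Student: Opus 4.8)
The plan is to start directly from the exact identity for $\mathcal{E}'(t)$ provided by Lemma~\ref{lemma_tec}, and then to carry out two simplifications: first cancel one cross term through the prescribed choice of $\xi$, and then bound the remaining gradient term using the flatness hypothesis $\hu(\gamma)$.

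First I would substitute $\xi = \lambda(\lambda+1-\f)$ into the formula of Lemma~\ref{lemma_tec}. With this choice the coefficient $\xi - \lambda(\lambda+1-\f)$ multiplying $\ps{\dot{x}(t)}{x(t)-x^*}$ vanishes identically, so that term disappears and one is left with
$$\mathcal{E}'(t) = 2a(t) + \lambda t \ps{-\nabla F(x(t))}{x(t)-x^*} + 2(\lambda+1-\f)b(t) - 2\lambda^2(\lambda+1-\f)c(t).$$

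Next I would estimate the surviving gradient term. The hypothesis $\hu(\gamma)$ states that $F(x)-F^* \leqslant \frac{1}{\gamma}\ps{\nabla F(x)}{x-x^*}$, which rearranges to $\ps{-\nabla F(x)}{x-x^*} \leqslant -\gamma(F(x)-F^*)$. Since $t>0$ and the relevant values of $\lambda$ are nonnegative, we have $\lambda t \geqslant 0$, so multiplying preserves the inequality; recalling $a(t)=t(F(x(t))-F^*)$, this yields $\lambda t \ps{-\nabla F(x(t))}{x(t)-x^*} \leqslant -\gamma\lambda\, a(t)$. Substituting this bound and collecting the two terms proportional to $a(t)$ produces the coefficient $(2-\gamma\lambda)$, which is precisely the claimed inequality~\eqref{EqConvBeta}.

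Every step is either an algebraic substitution or a single use of the defining inequality of $\hu(\gamma)$, so there is no genuine obstacle. The two points deserving attention are the sign condition $\lambda t \geqslant 0$, needed so that $\hu(\gamma)$ is applied in the correct direction, and the implicit understanding that $x(t)$ lies in the neighborhood where $\hu(\gamma)$ holds. Finally, I note that for $F(x)=|x|^{\gamma}$ the hypothesis $\hu(\gamma)$ holds with equality, so \eqref{EqConvBeta} becomes an equality in that case, in agreement with the remark following Lemma~\ref{LemmeFonda}.
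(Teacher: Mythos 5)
Your proof is correct and follows exactly the paper's own route: substitute $\xi=\lambda(\lambda+1-\f)$ into Lemma~\ref{lemma_tec} to kill the $\ps{\dot{x}(t)}{x(t)-x^*}$ term, then apply $\hu(\gamma)$ to replace $\lambda t\ps{-\nabla F(x(t))}{x(t)-x^*}$ by $-\gamma\lambda\,a(t)$. Your explicit remarks on the sign of $\lambda t$ and on the equality case for $F(x)=|x|^{\gamma}$ are consistent with the paper and add nothing that conflicts with it.
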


\begin{proof}
Choosing $\xi = \lambda (\lambda + 1 -\f)$ in Lemma~\ref{lem:tec1}, we get:
$$\mathcal{E}'(t)=2a(t)+\lambda t\ps{-\nabla F(x(t))}{x(t)-x^*} 
+2(\lambda+1-\f)
b(t)
-2\lambda^2(\lambda+1-\f) c(t).
$$
Applying now the assumption $\hu(\gamma)$, we finally obtain the expected result.

\end{proof}

One can notice that if $F(x)=|x|^{\gamma}$ the inequality of Lemma
\ref{lem:tec1} is actually an equality when $\xi = \lambda (\lambda + 1 -\f)$. This ensures that for this specific function $F$, the inequality in Lemma~\ref{LemmeFonda} is an equality. 

\begin{lemma} \label{lemma_tec4}
If $F(x)=|x|^{\gamma}$ and if $\xi = \lambda (\lambda + 1 -\f)$, then
\begin{eqnarray*}
\mathcal{H}'(t)&=&
t^{p} \left[ (2+p) a(t)+\lambda t\ps{-\nabla F(x(t))}{x(t)-x^*} 
\right.
+(2 \lambda+2 -2\f +p)b(t)
\\ &&
\left.
~~~+\lambda(\lambda+1-\f) (-2\lambda +p)c(t)
\right]
\end{eqnarray*}
\end{lemma}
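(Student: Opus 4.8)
The plan is to obtain $\HH'$ by directly differentiating the product $\HH(t)=t^p\EE(t)$ and then substituting the exact expression for $\EE'(t)$ furnished by Lemma~\ref{lem:tec1} under the prescribed choice $\xi=\lambda(\lambda+1-\f)$. First I would write
\begin{equation*}
\HH'(t)=p\,t^{p-1}\EE(t)+t^p\EE'(t),
\end{equation*}
and use the compact form $\EE(t)=t\bigl(a(t)+b(t)+\xi c(t)\bigr)$ recorded in the main text, so that the first summand becomes $p\,t^{p}\bigl(a(t)+b(t)+\xi c(t)\bigr)$. Since $\xi=\lambda(\lambda+1-\f)$, this contributes $p\,t^p a(t)$, $p\,t^p b(t)$ and $p\lambda(\lambda+1-\f)\,t^p c(t)$ to the three respective terms.

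Next, I would invoke Lemma~\ref{lem:tec1}. With the choice $\xi=\lambda(\lambda+1-\f)$ the coefficient $\xi-\lambda(\lambda+1-\f)$ of the cross term $\ps{\dot{x}(t)}{x(t)-x^*}$ vanishes, leaving the exact identity
\begin{equation*}
\EE'(t)=2a(t)+\lambda t\ps{-\nabla F(x(t))}{x(t)-x^*}+2(\lambda+1-\f)b(t)-2\lambda^2(\lambda+1-\f)c(t),
\end{equation*}
which holds for any $F$ (here the hypothesis $F(x)=|x|^\gamma$ is not yet needed, precisely because the gradient pairing is left unevaluated). I would multiply this identity by $t^p$ and add it to the contribution of $p\,t^{p-1}\EE(t)$ computed above, regrouping the result term by term according to $a$, $b$, $c$ and the gradient pairing.

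The bookkeeping then yields coefficient $(2+p)$ for $a(t)$, the unchanged term $\lambda t\ps{-\nabla F(x(t))}{x(t)-x^*}$, coefficient $p+2(\lambda+1-\f)=2\lambda+2-2\f+p$ for $b(t)$, and coefficient $p\lambda(\lambda+1-\f)-2\lambda^2(\lambda+1-\f)=\lambda(\lambda+1-\f)(-2\lambda+p)$ for $c(t)$, which is exactly the claimed formula. There is no genuine obstacle here: the argument is a one-line differentiation of a product followed by a linear regrouping, and the only point requiring arithmetic care is factoring the $c(t)$ coefficient. The role of the specialization $F(x)=|x|^\gamma$ is not in this step but in the one that follows it, namely to make the replacement $\ps{-\nabla F(x(t))}{x(t)-x^*}=-\gamma\bigl(F(x(t))-F^*\bigr)=-\gamma\,a(t)/t$ an exact equality, thereby upgrading the inequality of Lemma~\ref{LemmeFonda} to an equality for this specific $F$.
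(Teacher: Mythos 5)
Your proposal is correct and follows exactly the paper's own route: differentiate the product $\HH(t)=t^p\EE(t)$, use $\EE(t)=t\bigl(a(t)+b(t)+\xi c(t)\bigr)$ for the $p\,t^{p-1}\EE(t)$ term, and substitute the exact expression for $\EE'(t)$ from Lemma~\ref{lem:tec1} with $\xi=\lambda(\lambda+1-\f)$ killing the cross term, then regroup. Your side remark that the hypothesis $F(x)=|x|^\gamma$ is not actually needed at this step (only later, to turn the gradient pairing into $-\gamma a(t)/t$ exactly) is also consistent with the paper's own presentation.
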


\begin{proof}
We have $\mathcal{H}(t)=t^p \mathcal{E}(t)$. Hence $\mathcal{H}'(t)=t^p \mathcal{E}'(t)+pt^{p-1}\mathcal{E}(t)=t^{p-1} (t \mathcal{E}'(t)+p\mathcal{E}(t))$. We conclude by using Lemma~\ref{lem:tec1}.
\end{proof}

In conclusion, to prove Lemma \ref{LemmeFonda}, it is sufficient to plug the assumption $\hu(\gamma)$ into the equality of Lemma \ref{lemma_tec4}.   



\bibliographystyle{siamplain}
\bibliography{reference}
\end{document}